 \def\LaTeX{\leavevmode L\raise.42ex
   \hbox{\kern-.3em\size{\sf@size}{0pt}\selectfont A}\kern-.15em\TeX}
\newcommand{\BibTeX}{{\rm B\kern-.05em{\sc
i\kern-.025emb}\kern-.08em\TeX}}
\newtheorem{corollary}{Corollary}[section]
\newtheorem{theorem}{Theorem}[section]
\newtheorem{lemma}[theorem]{Lemma}
\newtheorem{remark}[theorem]{Remark}
\newtheorem{definition}[theorem]{Definition}
\numberwithin{equation}{section}
\begin{document}

\begin{center}

\title{Sampling formulas  for one-parameter groups of operators in Banach  spaces}

\maketitle

\author{Isaac Z. Pesenson }\footnote{ Department of  Mathematics, Temple University,
 Philadelphia,
PA 19122; pesenson@temple.edu  }

\end{center}

\bigskip

{\bf Keywords:}{ One-parameter groups of operators, Exponential and Bernstein vectors, regular and irregular sampling theorems for entire functions of exponential type}

{\bf Subject classifications:} [2000] {Primary: 47D03, 44A15;
Secondary: 4705 }

\begin{abstract}

We extend  some results about sampling of entire functions of exponential type to Banach spaces. By using  generator $D$ of one-parameter group $e^{tD}$ of isometries of a Banach space $E$ we introduce Bernstein subspaces $\mathbf{B}_{\sigma}(D),\>\>\sigma>0,$ of vectors $f$ in $E$ for which trajectories $e^{tD}f$ are  abstract-valued functions of exponential type which are  bounded on the real line. This property allows to reduce sampling problems for $e^{tD}f$ with $f\in \mathbf{B}_{\sigma}(D)$ to known sampling results for regular functions of exponential type $\sigma$.

\end{abstract}

\section{Introduction}

The goal of the paper is to extend  some theorems about sampling of entire functions of exponential type to Banach spaces. Our framework starts with  considering a generator $D$ of one-parameter strongly continuous  group  of operators $e^{tD}$ in a Banach space $E$.  The operator $D$ is used    to define analogs of  Bernstein subspaces $\mathbf{B}_{\sigma}(D)$. 
The main property of vectors $f$ in $\mathbf{B}_{\sigma}(D)$ is that corresponding  trajectories $e^{tD}f$ are  abstract-valued functions of exponential type which are  bounded on the real line. This fact  allows to apply known sampling theorems to every  function of the form $\left<e^{tD}f, \>g^{*}\right>\in \mathbb{B}_{\sigma}^{\infty}(\mathbb{R}),\>\>\>g^{*}\in E^{*},$ or of the form $t^{-1}\left<e^{tD}f-f, \>g^{*}\right>\in \mathbb{B}_{\sigma}^{2}(\mathbb{R})$, where $ \mathbb{B}_{\sigma}^{\infty}(\mathbb{R}),\>\> \mathbb{B}_{\sigma}^{2}(\mathbb{R})$ are classical Bernstein spaces on $\mathbb{R}$.

 In remark \ref{rem} we demonstrate that the assumption that $D$ generates a group of operators is somewhat essential if one wants to have non-trivial Bernstein spaces.  In section 2 we give a few  different descriptions of these spaces and one of them explores resent results in \cite{BSS} which extend classical Boas formulas  \cite{B1}, \cite{B} for entire functions of exponential type. Note, that in turn, Boas formulas are generalizations of  Riesz formulas \cite{R}, \cite{R1} for trigonometric polynomials.

Section \ref{reg} contains two sampling-type formulas  which explore regularly spaced samples and section \ref{irreg} is about two results in the spirit of irregular sampling. 
Section \ref{appl}  contains an application to inverse Cauchy problem for abstract Schr\"{o}dinger equation.

Note, that if $e^{tD} ,\>\>\>t\in \mathbb{R}, $ is a group of operators in a Banach space $E$ then any trajectory $e^{tD}f,\>\>\>f\in E,$ is completely determined by any (single) sample $e^{\tau D}f,$ because for any $t\in \mathbb{R}$
$$
e^{tD}f=e^{(t-\tau)D} \left(e^{\tau D}f\right).
$$

Our results  in sections \ref{reg} and \ref{irreg} have,  however, a different  nature. They represent a trajectory $e^{tD}f$ as a "linear combination" of a countable number of its samples. Such kind results can be useful when  the entire  group of operators $e^{tD}$ is unknown and only samples $e^{t_{k}D}f$ of a trajectory  $e^{tD}f$ are given.

It seems to be very interesting that not matter how complicated one-parameter group can be (think, for example, about a Schr\"{o}dinger operator $D=-\Delta+V(x)$ and the corresponding group $e^{itD}$ in $L_{2}(\mathbb{R}^{d})$) the formulas (\ref{s1}), (\ref{l0}), (\ref{s3}), (\ref{l3000}), (\ref{s4}) are universal in the sense that they contain the same coefficients and the same sets of sampling points.

It was my discussions  with Paul Butzer and Gerhard Schmeisser during Sampta 2013 in Jacobs University in Bremen of their beautiful work with Rudolf Stens \cite{BSS}, that stimulated my interest in the topic of the present paper. I am very grateful to them for this.
\section{Bernstein vectors  in Banach spaces}

We assume that $D$ is a generator of one-parameter group of
isometries $e^{ tD}$ in a Banach space $E$ with the norm $\|\cdot
\|$ (for precise definitions see \cite{BB}, \cite{K}). The notations $\mathcal{D}^{k}$ will be used for  the domain of $D^{k}$, and notation $\mathcal{D}^{\infty}$ for $\bigcap_{k\in \mathbb{N}}\mathcal{D}^{k}$.

\begin{definition}
The  subspace of exponential vectors $\mathbf{E}_{\sigma}(D), \>\>\sigma\geq 0,$ is defined as  a set of all vectors $f$ in  $\mathcal{D}^{\infty}$  for which there exists a constant $C(f,\sigma)>0$ such that 
\begin{equation}\label{Exp}
\|D^{k}f\|\leq C(f,\sigma)\sigma^{k}, \>\>k\in \mathbb{N}.
\end{equation}
 
\end{definition}
Note, that every $\mathbf{E}_{\sigma}(D)$ is clearly a linear subspace of $E$. What is really  important is the fact that union of all $\mathbf{E}_{\sigma}(D)$ is dense in $E$ (Corollary \ref{Density}). 

\begin{remark}\label{rem}
It is worth to stress  that if $D$ generates a strongly continuous bounded semigroup then the set  $\bigcup_{\sigma\geq 0}\mathbf{E}_{\sigma}(D)$ may not be  dense in $E$. 

Indeed,  consider a strongly continuous bounded semigroup $T(t)$ in $L_{2}(0,\infty)$ defined for every $f\in L_{2}(0,\infty)$ as $T(t)f(x)=f(x-t),$ if $\>x\geq t$ and $T(t)f(x)=0,$ if $\>\>0\leq x<t$.   Inequality (\ref{Exp}) implies that if $ f\in \mathbf{E}_{\sigma}(D)$ then for any $g\in L_{2}(0,\infty)$ the function $\left<T(t)f,\>g\right>$ is analytic in $t$. Thus if $g$ has compact support then $\left<T(t)f,\>g\right>$ is zero for  all  $t$ which implies that $f$ is zero. In other words in this case every space $\mathbf{E}_{\sigma}(D)$ is trivial. 

\end{remark}

\begin{definition}
The Bernstein subspace
 $\mathbf{B}_{\sigma}(D), \>\>\sigma\geq 0,$ is defined as  a set of all vectors $f$ in $E$ 
 which belong to $\mathcal{D}^{\infty}$  and for which 
\begin{equation}\label{Bernstein}
\|D^{k}f\|\leq \sigma^{k}\|f\|, \>\>k\in \mathbb{N}.
\end{equation}
\end{definition}

\begin{lemma}[\cite{Pes00}]\label{basic}
  Let $D$ be a generator of an one parameter group of operators
$e^{tD }$ in a Banach space $E$ and $\|e^{tD}f\|=\|f\|$.  If for
some $f\in E$ there exists an $\sigma
>0$ such
that the quantity

$$
\sup_{k\in N} \|D^{k}f\|\sigma ^{-k}=R(f,\sigma)
$$
 is finite, then $R(f,\sigma)\leq
\|f\|.$
\end{lemma}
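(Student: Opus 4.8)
The plan is to reduce the abstract inequality to the classical Bernstein inequality for scalar entire functions of exponential type by means of the Hahn--Banach theorem. First I would fix a functional $g^{*}\in E^{*}$ with $\|g^{*}\|\le 1$ and consider the scalar function $\phi(t)=\left\langle e^{tD}f,\,g^{*}\right\rangle$, $t\in\mathbb{R}$. Since $f\in\mathcal{D}^{\infty}$ and the hypothesis gives the geometric bound $\|D^{k}f\|\le R(f,\sigma)\sigma^{k}$, Taylor's formula with integral remainder for the smooth $E$-valued trajectory $t\mapsto e^{tD}f$ shows that $e^{tD}f=\sum_{k\ge 0}\frac{t^{k}}{k!}D^{k}f$, the series converging absolutely in $E$ (the remainder is dominated by $\frac{|t|^{n+1}}{(n+1)!}\|D^{n+1}f\|\le R(f,\sigma)\frac{(\sigma|t|)^{n+1}}{(n+1)!}\to 0$). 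Replacing $t$ by a complex variable $z$ produces an entire $E$-valued function $F(z)=\sum_{k\ge0}\frac{z^{k}}{k!}D^{k}f$ with $\|F(z)\|\le R(f,\sigma)e^{\sigma|z|}$; hence $\phi$ extends to the entire function $z\mapsto\left\langle F(z),g^{*}\right\rangle$ of exponential type at most $\sigma$.

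Next I would record the two facts needed to invoke the classical Bernstein inequality: (i) $\phi$ is entire of exponential type $\le\sigma$, as just shown; and (ii) $\phi$ is bounded on $\mathbb{R}$, since $|\phi(t)|=|\left\langle e^{tD}f,g^{*}\right\rangle|\le\|e^{tD}f\|\,\|g^{*}\|=\|f\|\,\|g^{*}\|\le\|f\|$, using that $e^{tD}$ is an isometry. Bernstein's inequality then yields $\sup_{t\in\mathbb{R}}|\phi'(t)|\le\sigma\sup_{t\in\mathbb{R}}|\phi(t)|$; since $\phi'$ is again entire of exponential type $\le\sigma$ and, by the same inequality, bounded on $\mathbb{R}$, iterating $k$ times gives $\sup_{t\in\mathbb{R}}|\phi^{(k)}(t)|\le\sigma^{k}\sup_{t\in\mathbb{R}}|\phi(t)|\le\sigma^{k}\|f\|$.

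Finally, evaluating at $t=0$ and using $\phi^{(k)}(0)=\left\langle D^{k}f,g^{*}\right\rangle$ (term-by-term differentiation of the power series, equivalently $\frac{d^{k}}{dt^{k}}e^{tD}f\big|_{t=0}=D^{k}f$), one obtains $|\left\langle D^{k}f,g^{*}\right\rangle|\le\sigma^{k}\|f\|$ for every $g^{*}$ in the unit ball of $E^{*}$. Taking the supremum over such $g^{*}$ and invoking Hahn--Banach gives $\|D^{k}f\|\le\sigma^{k}\|f\|$ for all $k\in\mathbb{N}$, whence $R(f,\sigma)=\sup_{k}\|D^{k}f\|\sigma^{-k}\le\|f\|$, which is the assertion.

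The step I expect to require the most care is establishing that the scalar trajectories $\phi$ are genuinely entire of exponential type at most $\sigma$ --- that is, that the abstract Taylor series actually converges to $e^{tD}f$ and continues to an entire function with the correct growth --- because everything afterwards is either the isometry hypothesis or the classical Bernstein inequality used essentially verbatim. One could alternatively bypass complex analysis and argue directly on $\mathbb{R}$: $\phi\in C^{\infty}(\mathbb{R})$ with $|\phi^{(k)}(t)|\le R(f,\sigma)\sigma^{k}$ uniformly in $t$, which already forces $\phi$ to be the restriction of an entire function of exponential type $\le\sigma$; but the analytic-continuation bookkeeping is cleanest carried out once, as above, at the level of the $E$-valued function $F$.
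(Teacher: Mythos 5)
Your proposal is correct and follows essentially the same route as the paper: reduce to the scalar functions $\left\langle e^{tD}f,g^{*}\right\rangle$, show they are entire of exponential type $\sigma$ and bounded on $\mathbb{R}$ by $\|f\|\,\|g^{*}\|$ via the isometry, apply Bernstein's inequality, evaluate at $t=0$, and recover the norm with Hahn--Banach. The only difference is cosmetic: you justify the convergence of the abstract Taylor series to $e^{tD}f$ via the integral remainder (a detail the paper leaves implicit) and iterate the first-order Bernstein inequality $k$ times where the paper invokes the $k$-th order form directly.
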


\begin{proof}
  By assumption $\|D^{r}f\|\leq R(f,\sigma)\sigma ^{r}, r\in \mathbb{N}$.  Now for
any complex number $z$ we have

$$ \left\|e^{zD}f\right\|=\left\|\sum ^{\infty}_{r=0}(z^{r}D^{r}g)/r!\right\|\leq R(f,\sigma) \sum
^{\infty}_{r=0}|z|^{r}\sigma^{r}/r!=R(f,\sigma)e^{|z|\sigma}.$$

It implies that for any functional $h^{*}\in E^{*}$ the scalar function
$\left<e^{zD}f,h^{*}\right>$ is an entire function
 of exponential type $\sigma $ which is bounded on the real axis  $\mathbb{R}$
by the constant $\|h^{*}\| \|f\|$.
 An application of the Bernstein inequality gives

$$\left\|<e^{tD}D^{k}f,h^{*}>\right\|_{C(\mathbb{R})}=\left\|\left(\frac{d}{dt}\right)^{k}\left<e^{tD}f,h^{*}\right>\right\|_{
C(\mathbb{R})} \leq\sigma^{k}\|h^{*}\| \|f\|.$$

The last one gives for $t=0$

$$ \left|\left<D^{k}f,h^{*}\right>\right|\leq \sigma ^{k} \|h^{*}\| \|f\|.$$

Choosing $h^{*}$ such that $\|h^{*}\|=1$ and $\left<D^{k}f,h^{*}\right>=\|D^{k}f\|$ we obtain
the inequality $\|D^{k}f\|\leq
 \sigma ^{k} \|f\|, k\in N$, which gives

$$R(f,\sigma)=\sup _{k\in \mathbb{N} }(\sigma ^{-k}\|D^{k}f\|)\leq \|f\|.$$

Lemma is proved.
\end{proof}

\begin{theorem}[\cite{Pes14}]\label{t1}
  Let $D$ be a generator of  one-parameter group of operators
$e^{tD }$ in a Banach space $E$ and $\|e^{tD}f\|=\|f\|$.  Then for every $\sigma\geq 0$ 
$$
\mathbf{B}_{\sigma}(D)= \mathbf{E}_{\sigma}(D) , \>\>\>\sigma\geq 0, 
$$

\end{theorem}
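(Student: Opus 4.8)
The plan is to prove the two inclusions $\mathbf{E}_{\sigma}(D)\subseteq \mathbf{B}_{\sigma}(D)$ and $\mathbf{B}_{\sigma}(D)\subseteq \mathbf{E}_{\sigma}(D)$ separately, the first being immediate and the second being exactly the content of Lemma \ref{basic}. The inclusion $\mathbf{B}_{\sigma}(D)\subseteq \mathbf{E}_{\sigma}(D)$ is trivial: if $f\in \mathbf{B}_{\sigma}(D)$, then $\|D^k f\|\le \sigma^k\|f\|$ for all $k$, so taking $C(f,\sigma)=\|f\|$ shows that $f$ satisfies the defining inequality \eqref{Exp}, hence $f\in\mathbf{E}_{\sigma}(D)$ (and of course $f\in\mathcal D^\infty$ by hypothesis).

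For the reverse inclusion $\mathbf{E}_{\sigma}(D)\subseteq \mathbf{B}_{\sigma}(D)$, I would argue as follows. Let $f\in\mathbf{E}_{\sigma}(D)$, so there is a constant $C(f,\sigma)$ with $\|D^k f\|\le C(f,\sigma)\sigma^k$ for all $k\in\mathbb N$. In particular the quantity $\sup_{k\in\mathbb N}\|D^k f\|\sigma^{-k}=R(f,\sigma)$ is finite, so Lemma \ref{basic} applies and yields $R(f,\sigma)\le\|f\|$. Unwinding the definition of $R(f,\sigma)$, this says precisely $\|D^k f\|\le\sigma^k\|f\|$ for every $k\in\mathbb N$, which together with $f\in\mathcal D^\infty$ is the definition of $f\in\mathbf{B}_{\sigma}(D)$. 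Combining the two inclusions gives $\mathbf{E}_{\sigma}(D)=\mathbf{B}_{\sigma}(D)$.

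I do not expect a genuine obstacle here, since the theorem is essentially a repackaging of Lemma \ref{basic}: the lemma already does the analytic work (bounding the entire function $\langle e^{zD}f,h^*\rangle$, invoking the classical Bernstein inequality, and evaluating at $t=0$ with a norming functional). The only point worth a remark is that the case $\sigma=0$ should be checked directly — there $\mathbf{E}_0(D)$ forces $\|D^k f\|=0$ for all $k\ge 1$, and likewise $\mathbf{B}_0(D)$ forces $D^k f=0$, so both subspaces coincide with $\ker D$ and the equality is trivially true; for $\sigma>0$ Lemma \ref{basic} applies verbatim. Hence the proof is complete once these two short inclusions are recorded.

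Alternatively, and perhaps more in the spirit of making the paper self-contained, one could note that the two definitions differ only in the admissible constant: $\mathbf{E}_{\sigma}(D)$ allows an arbitrary constant $C(f,\sigma)$ while $\mathbf{B}_{\sigma}(D)$ pins it down to the optimal value $\|f\|$. Thus the theorem is really the statement that, under the isometry hypothesis $\|e^{tD}f\|=\|f\|$, the best constant is automatically $\|f\|$, which is exactly Lemma \ref{basic}. I would present the proof in the two-inclusion form above, citing Lemma \ref{basic} for the nontrivial direction and leaving the trivial direction as a one-line observation.
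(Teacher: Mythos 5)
Your proof is correct and follows exactly the paper's route: the inclusion $\mathbf{B}_{\sigma}(D)\subseteq\mathbf{E}_{\sigma}(D)$ is immediate from the definitions, and the reverse inclusion is precisely Lemma \ref{basic}. Your extra remark on the $\sigma=0$ case is a harmless refinement the paper omits.
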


\begin{proof}
The inclusion  $
\mathbf{B}_{\sigma}(D)= \mathbf{E}_{\sigma}(D) , \>\>\>\sigma\geq 0, 
$ is obvious. The opposite inclusion follows from the previous Lemma. 
\end{proof}

Motivated by results in \cite{BSS} we introduce the following  bounded operators

\begin{equation}\label{b1}
\mathcal{B}_{D}^{(2m-1)}(\sigma)f=\left(\frac{\sigma}{\pi}\right)^{2m-1}\sum_{k\in \mathbb{Z}}(-1)^{k+1}A_{m,k}e^{\frac{\pi}{\sigma}(k-1/2)D}f,\>\> f\in E, \>\>\sigma>0,\>\>\>m\in \mathbb{N},
\end{equation}
\begin{equation}\label{b2}
\mathcal{B}_{D}^{(2m)}(\sigma)f=\left(\frac{\sigma}{\pi}\right)^{2m}\sum_{k\in \mathbb{Z}}(-1)^{k+1}B_{m,k}e^{\frac{\pi k}{\sigma}D}f, \>\>f\in E,\> \sigma>0,\>m\in \mathbb{N},
\end{equation}

where 
\begin{equation}\label{A}
A_{m,k}=(-1)^{k+1} \rm sinc ^{(2m-1)}\left(\frac{1}{2}-k\right)=
$$
$$
\frac{(2m-1)!}{\pi(k-\frac{1}{2})^{2m}}\sum_{j=0}^{m-1}\frac{(-1)^{j}}{(2j)!}\left(\pi(k-\frac{1}{2})\right)^{2j},\>\>\>m\in \mathbb{N},
\end{equation}
for $k\in \mathbb{Z}$ and 
\begin{equation}\label{B}
B_{m,k}=(-1)^{k+1} \rm sinc ^{(2m)}(-k)=\frac{(2m)!}{\pi k^{2m+1}}\sum_{j=0}^{m-1}\frac{(-1)^{j}(\pi k)^{2j+1}}{(2j+1)!},\>\>\>m\in \mathbb{N},\>\>\>k\in \mathbb{Z}\setminus {0},
\end{equation}
and 
\begin{equation}\label{B0}
B_{m,0}=(-1)^{m+1} \frac{\pi^{2m}}{2m+1},\>\>\>m\in \mathbb{N}.
\end{equation}

  Both series converge in $E$ due to the following formulas (see \cite{BSS})

\begin{equation}
\left(\frac{\sigma}{\pi}\right)^{2m-1}\sum_{k\in \mathbb{Z}}\left|A_{m,k}\right|=\sigma^{2m-1},\>\>\>\>\>
 \left(\frac{\sigma}{\pi}\right)^{2m}\sum_{k\in \mathbb{Z}}\left|B_{m,k}\right|=\sigma^{2m}\label{id-2}.
 \end{equation}
 Since $\|e^{tD}f\|=\|f\|$ it implies that 
 
 \begin{equation}\label{norms}
 \|\mathcal{B}_{D}^{(2m-1)}(\sigma)f\|\leq \sigma^{2m-1}\|f\|,\>\>\>\>\>\|\mathcal{B}_{D}^{(2m)}(\sigma)f\|\leq \sigma^{2m}\|f\|,\>\>\>f\in E. 
 \end{equation}
 
 For the following theorem see  \cite{Pes00}, \cite{Pes08}, \cite{Pes11}, \cite{Pes14}.

 \begin{theorem}
 If $D$ generates a one-parameter strongly continuous bounded group of operators $e^{tD}$ in a Banach space $E$ then  the following conditions are equivalent:

\begin{enumerate}

\item  $f$ belongs to $\mathbf{B}_{\sigma}(D)$.

\item   The abstract-valued function $e^{tD}f$  is entire abstract-valued function of exponential type $\sigma$ which is bounded on the real line.

\item For every functional $g^{*}\in E^{*}$ the function $\left<e^{tD}f,\>g^{*}\right>$  is entire function of exponential type $\sigma$ which is bounded on the real line.
\item The following Boas-type interpolation formulas hold true for $r\in \mathbb{N}$

\begin{equation}\label{B1}
D^{r}f=\mathcal{B}_{D}^{(r)}(\sigma)f,\>\>\>\>\>f\in \mathbf{B}_{\sigma}(D).
\end{equation}

\end{enumerate}

\end{theorem}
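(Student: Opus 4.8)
My plan is to establish the cycle $(1)\Rightarrow(2)\Rightarrow(3)\Rightarrow(1)$ and, separately, the equivalence $(1)\Leftrightarrow(4)$. The step $(1)\Rightarrow(2)$ and the estimates needed for $(3)\Rightarrow(1)$ are essentially the computation already carried out in the proof of Lemma~\ref{basic}.

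For $(1)\Rightarrow(2)\Rightarrow(3)$: if $f\in\mathbf B_\sigma(D)$ then $f\in\mathcal D^\infty$ and, by (\ref{Bernstein}), the series $\sum_{k\ge0}z^kD^kf/k!$ converges absolutely and locally uniformly in $\mathbb C$, defining an entire $E$-valued function whose restriction to $\mathbb R$ is $e^{tD}f$ and which satisfies $\|e^{zD}f\|\le\|f\|e^{\sigma|z|}$; this gives exponential type $\le\sigma$, and $\|e^{tD}f\|=\|f\|$ gives boundedness on $\mathbb R$, which is $(2)$. Passing to $(3)$ is immediate: for $g^*\in E^*$ the scalar function $\langle e^{zD}f,g^*\rangle$ is entire, and $|\langle e^{zD}f,g^*\rangle|\le\|g^*\|\,\|e^{zD}f\|$ carries over both the type estimate and the boundedness on $\mathbb R$.

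The heart of the matter is $(3)\Rightarrow(1)$. Fix $g^*\in E^*$; by hypothesis $\phi_{g^*}(t):=\langle e^{tD}f,g^*\rangle$ lies in $\mathbb B_\sigma^\infty(\mathbb R)$ with $\|\phi_{g^*}\|_{C(\mathbb R)}\le\|f\|\,\|g^*\|$ (here one uses $\|e^{tD}f\|=\|f\|$), so iterating Bernstein's inequality gives $|\phi_{g^*}^{(k)}(0)|\le\sigma^k\|f\|\,\|g^*\|$ for every $k$. Since $g^*\mapsto\phi_{g^*}^{(k)}(0)$ is linear, it is represented by an element $a_k\in E^{**}$ with $\|a_k\|_{E^{**}}\le\sigma^k\|f\|$, and this bound makes the $E^{**}$-valued power series $G(z):=\sum_{k\ge0}z^ka_k/k!$ entire, with $G^{(k)}(0)=a_k$. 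Pairing with an arbitrary $g^*$ shows that for real $t$ the value $G(t)$ equals the canonical image $\iota(e^{tD}f)\in E^{**}$ (it reproduces the Taylor expansion of $\phi_{g^*}$ at $0$), so $t\mapsto\iota(e^{tD}f)$ is real-analytic, $\iota\colon E\to E^{**}$ being the isometric bidual embedding. I would then induct on $k$: assuming $f\in\mathcal D^{k-1}$ and $\iota(D^{k-1}f)=a_{k-1}$, the relation $G^{(k-1)}(t)=\iota(e^{tD}D^{k-1}f)$ (which holds because $\frac{d^{k-1}}{dt^{k-1}}e^{tD}f=e^{tD}D^{k-1}f$) shows that the difference quotient $h^{-1}(e^{hD}D^{k-1}f-D^{k-1}f)$ has $\iota$-image converging in $E^{**}$ to $G^{(k)}(0)=a_k$; since $\iota$ is isometric this difference quotient is norm-Cauchy in $E$, whence $f\in\mathcal D^k$ and $\iota(D^kf)=a_k$. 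Consequently $f\in\mathcal D^\infty$ and $\|D^kf\|=\|a_k\|_{E^{**}}\le\sigma^k\|f\|$, i.e. (\ref{Bernstein}).

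It remains to treat $(1)\Leftrightarrow(4)$. The implication $(4)\Rightarrow(1)$ is immediate from (\ref{norms}): the identity (\ref{B1}) forces $\|D^rf\|=\|\mathcal B_D^{(r)}(\sigma)f\|\le\sigma^r\|f\|$. For $(1)\Rightarrow(4)$, by what has been proved every $\phi_{g^*}$ belongs to $\mathbb B_\sigma^\infty(\mathbb R)$ and, since $f\in\mathcal D^\infty$, one has $\phi_{g^*}^{(r)}(0)=\langle D^rf,g^*\rangle$; applying the scalar Boas-type differentiation formulas of \cite{BSS} to $\phi_{g^*}$, evaluated at $t=0$ (their coefficients being exactly the $A_{m,k},B_{m,k}$ of (\ref{A})--(\ref{B0})), and using continuity of $g^*$ to pass it through the absolutely convergent series (\ref{b1})--(\ref{b2}), one gets $\langle D^rf,g^*\rangle=\langle\mathcal B_D^{(r)}(\sigma)f,g^*\rangle$ for every $g^*$, hence (\ref{B1}). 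The single genuinely delicate point in all of this is $(3)\Rightarrow(1)$: the hypothesis there is only \emph{weak} regularity of $t\mapsto e^{tD}f$, and converting it into the strong statement $f\in\mathcal D^\infty$ (so that the derivatives at the origin really are the vectors $D^kf$) is exactly what the detour through $E^{**}$ and the isometry of the bidual embedding accomplishes.
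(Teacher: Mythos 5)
Your proposal is correct, and it is worth noting that the paper itself gives no proof of this theorem --- it only cites \cite{Pes00}, \cite{Pes08}, \cite{Pes11}, \cite{Pes14} --- so the comparison is with the techniques the paper displays elsewhere. Your implications $(1)\Rightarrow(2)\Rightarrow(3)$, the deduction $(1)\Rightarrow(4)$ by applying the scalar Boas formulas of \cite{BSS} to $\left<e^{tD}f,g^{*}\right>$ and separating points with $E^{*}$, and $(4)\Rightarrow(1)$ via (\ref{norms}) are exactly the dualization scheme the paper uses in Lemma \ref{basic} and in Sections \ref{reg}--\ref{irreg}. The genuinely new ingredient you supply is the treatment of $(3)\Rightarrow(1)$: the hypothesis there is only weak (scalar) analyticity of the trajectory, and Lemma \ref{basic} cannot be invoked directly because it presupposes $f\in\mathcal{D}^{\infty}$. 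Your detour through the bidual --- representing $g^{*}\mapsto \phi_{g^{*}}^{(k)}(0)$ by $a_{k}\in E^{**}$ with $\|a_{k}\|\leq\sigma^{k}\|f\|$ via Bernstein's inequality, assembling the entire $E^{**}$-valued function $G(z)=\sum z^{k}a_{k}/k!$ that agrees with $\iota(e^{tD}f)$ on $\mathbb{R}$, and then using norm-differentiability of $G$ together with the isometry of $\iota$ and closedness of $\iota(E)$ to show inductively that the difference quotients of $e^{hD}D^{k-1}f$ converge in $E$ --- is a correct and standard-in-spirit weak-to-strong upgrade, and it is precisely the step the paper leaves to the references. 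Two minor points you may wish to make explicit: in $(1)\Rightarrow(2)$ the identification of $\sum t^{k}D^{k}f/k!$ with $e^{tD}f$ on $\mathbb{R}$ requires a one-line appeal to uniqueness for the Cauchy problem (\ref{Cauchy2}); and your use of $\|e^{tD}f\|=\|f\|$ relies on the paper's standing isometry assumption from Section 2 rather than the weaker ``bounded group'' wording in the theorem statement (for a merely bounded group the constant in (\ref{Bernstein}) would degrade to that of $\mathbf{E}_{\sigma}(D)$, and one would then finish with Lemma \ref{basic} after renorming).
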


\begin{corollary}
Every $\mathbf{B}_{\sigma}(D)$ is a closed linear subspace of $E$.
\end{corollary}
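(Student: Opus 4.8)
The plan is to treat the two assertions — that $\mathbf{B}_{\sigma}(D)$ is a linear subspace and that it is closed — separately, reducing each to results already established. Note that neither is visible ``by inspection'' from the defining inequality \eqref{Bernstein}, since the factor $\|f\|$ on its right-hand side is not additive; this is precisely where Lemma~\ref{basic} and the preceding Theorem do the real work.

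For linearity I would invoke Theorem~\ref{t1}, which identifies $\mathbf{B}_{\sigma}(D)$ with $\mathbf{E}_{\sigma}(D)$. The space $\mathbf{E}_{\sigma}(D)$ is transparently linear: if $\|D^{k}f\|\leq C_{1}\sigma^{k}$ and $\|D^{k}h\|\leq C_{2}\sigma^{k}$ for all $k$, then $\|D^{k}(af+bh)\|\leq(|a|C_{1}+|b|C_{2})\sigma^{k}$, so $af+bh\in\mathbf{E}_{\sigma}(D)=\mathbf{B}_{\sigma}(D)$.

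For closedness, let $f_{n}\in\mathbf{B}_{\sigma}(D)$ with $f_{n}\to f$ in $E$, and fix $r\in\mathbb{N}$. By part (4) of the preceding Theorem, $D^{r}f_{n}=\mathcal{B}_{D}^{(r)}(\sigma)f_{n}$ for every $n$; and by \eqref{norms} the operator $\mathcal{B}_{D}^{(r)}(\sigma)$ is bounded on all of $E$, hence continuous, so $D^{r}f_{n}=\mathcal{B}_{D}^{(r)}(\sigma)f_{n}\to\mathcal{B}_{D}^{(r)}(\sigma)f$ in $E$. Since $D$ generates a group, each power $D^{r}$ is a closed operator, so from $f_{n}\to f$ together with the convergence of $D^{r}f_{n}$ we conclude $f\in\mathcal{D}^{r}$ and $D^{r}f=\mathcal{B}_{D}^{(r)}(\sigma)f$. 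Letting $r$ range over $\mathbb{N}$ gives $f\in\mathcal{D}^{\infty}$, and a last application of \eqref{norms} yields $\|D^{r}f\|=\|\mathcal{B}_{D}^{(r)}(\sigma)f\|\leq\sigma^{r}\|f\|$ for all $r$, i.e.\ $f\in\mathbf{B}_{\sigma}(D)$.

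The main (and essentially only) obstacle is the one flagged at the outset: because $\|f\|$ appears on the right of \eqref{Bernstein}, one cannot argue directly and must route through the group-theoretic description of $\mathbf{B}_{\sigma}(D)$. A variant of the closedness step avoids the Boas formulas: for each $g^{*}\in E^{*}$ the entire functions $\langle e^{zD}f_{n},g^{*}\rangle$, of exponential type $\leq\sigma$, are uniformly bounded on compact subsets of $\mathbb{C}$ (using $\|e^{zD}f_{n}\|\leq\|f_{n}\|e^{|z|\sigma}$ and $\|f_{n}\|\to\|f\|$), and, since $e^{tD}$ is an isometry, they converge on $\mathbb{R}$ to $\langle e^{tD}f,g^{*}\rangle$; Vitali's theorem then shows $t\mapsto\langle e^{tD}f,g^{*}\rangle$ extends to an entire function of exponential type $\leq\sigma$ bounded on $\mathbb{R}$ by $\|g^{*}\|\|f\|$, and the equivalence (3)$\Leftrightarrow$(1) of the preceding Theorem places $f$ in $\mathbf{B}_{\sigma}(D)$. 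Either way, the only points requiring care are the (standard) closedness of $D^{r}$ for a group generator and the continuity of $\mathcal{B}_{D}^{(r)}(\sigma)$, the latter being immediate from \eqref{norms}.
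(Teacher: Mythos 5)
Your argument is correct and is exactly the one the paper intends: the corollary is stated immediately after the theorem precisely because linearity follows from the identification $\mathbf{B}_{\sigma}(D)=\mathbf{E}_{\sigma}(D)$ of Theorem~\ref{t1}, while closedness follows from the Boas-type formulas $D^{r}f=\mathcal{B}_{D}^{(r)}(\sigma)f$ together with the bounds (\ref{norms}) and the closedness of the powers $D^{r}$ of the generator. Your alternative route via Vitali's theorem and condition (3) of the same theorem is also sound, but the main line of your proof matches the paper's implicit one.
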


 Let's introduce the operator
 $\>\>
 \Delta^{m}_{s}f=(I-e^{sD})^{m}f, \>\>m\in \mathbb{N}, 
 $
 and  the modulus of
continuity \cite{BB}
$$
\Omega_{m}(f,s)=\sup_{|\tau|\leq
s}\left\|\Delta^{m}_{\tau}f\right\|\label{dif}.
$$

The following theorem is proved in \cite{Pes09}, \cite{Pes11},  \cite{Pes14}.
\begin{theorem} There exists a constant $C>0$ such that for all
$\sigma>0$ and all $f\in \mathcal{D}^{k}$
\begin{equation}
inf_{ g\in \mathbf{B}_{\sigma}(D) }\|f-g\|\leq
C\sigma^{-k}\Omega_{m-k}\left(D^{k}f, \sigma^{-1}\right),
0\leq k\leq m.\label{J}
\end{equation}
\end{theorem}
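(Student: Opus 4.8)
The plan is to reduce the estimate to the case $k=0$ and then, for that case, to realize an explicit approximant in $\mathbf{B}_{\sigma}(D)$ by averaging the group $e^{tD}$ against a band-limited mollifier.

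First I would handle the reduction. For $h\in\mathcal{D}^{1}$ one has $(I-e^{sD})h=-\int_{0}^{s}e^{uD}Dh\,du$; writing $\Delta^{m}_{s}=\Delta^{m-1}_{s}(I-e^{sD})$, commuting the bounded operators $\Delta^{m-1}_{s}$ and $e^{uD}$, iterating $k$ times (this is where $f\in\mathcal{D}^{k}$ is used) and recalling $\|e^{uD}v\|=\|v\|$, I expect to obtain
\begin{equation*}
\|\Delta^{m}_{s}f\|\le|s|^{k}\,\|\Delta^{m-k}_{s}D^{k}f\|\le|s|^{k}\,\Omega_{m-k}(D^{k}f,|s|),
\end{equation*}
hence $\Omega_{m}(f,\delta)\le\delta^{k}\,\Omega_{m-k}(D^{k}f,\delta)$ for every $\delta>0$. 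Consequently it is enough to find a constant $C=C(m)$ and, for every $\sigma>0$ and $f\in E$, a vector $g\in\mathbf{B}_{\sigma}(D)$ with $\|f-g\|\le C\,\Omega_{m}(f,\sigma^{-1})$; substituting the last displayed inequality then yields the full statement for all $0\le k\le m$.

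For the case $k=0$ I would fix once and for all a Schwartz function $\rho_{0}$ on $\mathbb{R}$ with $\int_{\mathbb{R}}\rho_{0}=1$ and $\widehat{\rho_{0}}$ supported in $[-1,1]$ (take $\widehat{\rho_{0}}\in C_{c}^{\infty}[-1,1]$ with $\widehat{\rho_{0}}(0)=1$ and $\rho_{0}$ its inverse Fourier transform), put $\rho_{\sigma}(s)=\sigma\rho_{0}(\sigma s)$ so that $\int\rho_{\sigma}=1$ and $\widehat{\rho_{\sigma}}(\xi)=\widehat{\rho_{0}}(\xi/\sigma)$ is supported in $[-\sigma,\sigma]$, and set
\begin{equation*}
g:=\sum_{j=1}^{m}(-1)^{j+1}\binom{m}{j}\int_{\mathbb{R}}e^{jsD}f\,\rho_{\sigma}(s)\,ds,
\end{equation*}
the Bochner integrals existing by strong continuity, $\|e^{jsD}f\|=\|f\|$, and $\rho_{\sigma}\in L_{1}$. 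Since $(I-e^{sD})^{m}=\sum_{j=0}^{m}(-1)^{j}\binom{m}{j}e^{jsD}$, splitting $\int_{\mathbb{R}}\Delta^{m}_{s}f\,\rho_{\sigma}(s)\,ds$ according to $j$ gives the $j=0$ term $f\int\rho_{\sigma}=f$ and the terms $j\ge1$ summing to $-g$, i.e.\ the key identity $f-g=\int_{\mathbb{R}}\Delta^{m}_{s}f\,\rho_{\sigma}(s)\,ds$. From it, $\|f-g\|\le\int_{\mathbb{R}}\Omega_{m}(f,|s|)\,|\rho_{\sigma}(s)|\,ds$; the semi-additivity $\Omega_{m}(f,ns)\le n^{m}\Omega_{m}(f,s)$ for $n\in\mathbb{N}$ (which follows from $\Delta^{m}_{ns}=(\sum_{i=0}^{n-1}e^{isD})^{m}\Delta^{m}_{s}$ and $\|e^{isD}v\|=\|v\|$) together with monotonicity of $\Omega_{m}(f,\cdot)$ gives $\Omega_{m}(f,|s|)\le(1+\sigma|s|)^{m}\Omega_{m}(f,\sigma^{-1})$, and the substitution $u=\sigma s$ turns $\int(1+\sigma|s|)^{m}|\rho_{\sigma}(s)|\,ds$ into the $\sigma$-independent constant $C(m):=\int_{\mathbb{R}}(1+|u|)^{m}|\rho_{0}(u)|\,du<\infty$. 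This gives $\|f-g\|\le C(m)\,\Omega_{m}(f,\sigma^{-1})$.

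The step requiring the most care is showing $g\in\mathbf{B}_{\sigma}(D)$, which I would do via the characterization of $\mathbf{B}_{\sigma}(D)$ proved above (equivalence of conditions $1$ and $3$). Fix $g^{*}\in E^{*}$; then $F(t):=\langle e^{tD}f,g^{*}\rangle$ is continuous with $|F(t)|\le\|f\|\,\|g^{*}\|$, and for $1\le j\le m$ the substitution $u=js$ rewrites the $j$-th summand of $\langle e^{tD}g,g^{*}\rangle$ as a scalar multiple of $\int_{\mathbb{R}}F(t+u)\,\mu_{j}(u)\,du$ with $\mu_{j}(u)=\tfrac{1}{j}\rho_{\sigma}(u/j)\in L_{1}(\mathbb{R})$ and $\widehat{\mu_{j}}(\xi)=\widehat{\rho_{\sigma}}(j\xi)$ supported in $[-\sigma/j,\sigma/j]\subseteq[-\sigma,\sigma]$; the innocuous-looking but essential point here is that the dilation by $j$ coming from $e^{jsD}$ \emph{shrinks} the spectrum rather than enlarging it, so no scaling of the kernel's bandwidth below $\sigma$ is needed. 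Each of these convolutions is bounded on $\mathbb{R}$ with Fourier transform supported in $[-\sigma,\sigma]$, hence by Paley--Wiener--Schwartz extends to an entire function of exponential type $\le\sigma$ bounded on $\mathbb{R}$, and therefore so does the finite sum $\langle e^{tD}g,g^{*}\rangle\in\mathbb{B}_{\sigma}^{\infty}(\mathbb{R})$; since $g^{*}$ was arbitrary, the characterization gives $g\in\mathbf{B}_{\sigma}(D)$. Combining this with the error bound yields $\inf_{g\in\mathbf{B}_{\sigma}(D)}\|f-g\|\le C(m)\,\Omega_{m}(f,\sigma^{-1})$, and the reduction step upgrades it to $\inf_{g\in\mathbf{B}_{\sigma}(D)}\|f-g\|\le C(m)\,\sigma^{-k}\,\Omega_{m-k}(D^{k}f,\sigma^{-1})$ for all $0\le k\le m$.
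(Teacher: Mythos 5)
Your proof is correct and complete. The paper itself gives no proof of this theorem (it only cites \cite{Pes09}, \cite{Pes11}, \cite{Pes14}); your argument --- reducing to the case $k=0$ via $\Omega_{m}(f,\delta)\le\delta^{k}\Omega_{m-k}(D^{k}f,\delta)$ and then producing the approximant by averaging the group against a bandlimited mollifier through the binomial expansion of $(I-e^{sD})^{m}$, with membership in $\mathbf{B}_{\sigma}(D)$ checked weakly via the characterization of Bernstein vectors --- is exactly the standard Jackson-type construction used in those references (and going back to Butzer--Berens and Krein--Pesenson), so in substance it coincides with the intended proof.
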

\begin{corollary}\label{Density}
The set $\bigcup_{\sigma\geq 0}\mathbf{B}_{\sigma}(D)$ is dense in $E$. 
\end{corollary}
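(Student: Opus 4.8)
The plan is to deduce density of $\bigcup_{\sigma\geq 0}\mathbf{B}_{\sigma}(D)$ directly from the Jackson-type inequality (\ref{J}) together with the standard approximation properties of the group $e^{tD}$. First I would fix an arbitrary $f\in E$ and $\varepsilon>0$ and reduce to the case $f\in\mathcal{D}^{\infty}$: since $D$ generates a strongly continuous group, the smooth vectors $\mathcal{D}^{\infty}$ are dense in $E$ (one can mollify $f$ by $\int \varphi(t)e^{tD}f\,dt$ with $\varphi\in C_{0}^{\infty}(\mathbb{R})$, which lies in $\mathcal{D}^{\infty}$ and tends to $f$ as the support of $\varphi$ shrinks). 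So it suffices to approximate a given $f\in\mathcal{D}^{\infty}$ by elements of $\mathbf{B}_{\sigma}(D)$.

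Next I would invoke (\ref{J}) with $k=0$ and any fixed $m\in\mathbb{N}$, which gives
\begin{equation*}
\inf_{g\in\mathbf{B}_{\sigma}(D)}\|f-g\|\leq C\,\Omega_{m}\!\left(f,\sigma^{-1}\right).
\end{equation*}
The key point is then that $\Omega_{m}(f,s)\to 0$ as $s\to 0^{+}$ for every $f\in E$. This is a standard fact: strong continuity of $t\mapsto e^{tD}f$ at $t=0$ gives $\|(I-e^{\tau D})f\|\to 0$ as $\tau\to 0$, hence $\Omega_{1}(f,s)\to 0$; the higher-order moduli are controlled since $\Delta^{m}_{\tau}=(I-e^{\tau D})\Delta^{m-1}_{\tau}$ and $\|I-e^{\tau D}\|\leq 2$ uniformly (the group is bounded, in fact isometric), so $\Omega_{m}(f,s)\leq 2^{m-1}\sup_{|\tau|\le s}\|(I-e^{\tau D})f\|\to 0$. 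Therefore, choosing $\sigma$ large enough that $C\,\Omega_{m}(f,\sigma^{-1})<\varepsilon$, we find $g\in\mathbf{B}_{\sigma}(D)$ with $\|f-g\|<\varepsilon$, which proves density.

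The main obstacle — really the only nontrivial point — is justifying that $\Omega_{m}(f,s)\to 0$ as $s\to 0$, i.e. that the supremum over $|\tau|\le s$ behaves well; this follows from strong continuity of the group at the origin plus the uniform bound on $\|I-e^{\tau D}\|$, so it is routine but should be stated. Everything else is a direct application of the already-proved Jackson inequality (\ref{J}). I would present the argument in two short steps: (i) reduce to $f\in\mathcal{D}^{\infty}$ (or simply note that (\ref{J}) with $k=0$ needs only $f\in\mathcal{D}^{0}=E$, in which case this reduction is unnecessary and one applies (\ref{J}) directly to any $f\in E$), and (ii) let $\sigma\to\infty$ in (\ref{J}) and use $\Omega_{m}(f,\sigma^{-1})\to 0$.
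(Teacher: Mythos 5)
Your proposal is correct and follows essentially the same route as the paper, which derives the corollary directly from the Jackson-type inequality (\ref{J}) with $k=0$ together with $\Omega_{m}(f,\sigma^{-1})\to 0$ as $\sigma\to\infty$ (a consequence of strong continuity and the uniform bound $\|I-e^{\tau D}\|\leq 2$). Your preliminary reduction to $\mathcal{D}^{\infty}$ is unnecessary, as you yourself observe, since $k=0$ in (\ref{J}) requires only $f\in E$.
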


\begin{definition}
For a given $f\in E$ the notation $\sigma_{f}$ will be used for the smallest finite real number (if any) for which 
$$\|D^{k}f\|\leq \sigma_{f}^{k}\|f\|,\>\>\>k\in\mathbb{N}.
$$
 If there is no such finite number we assume that $\sigma_{f}=\infty$. 
\end{definition}

Now we  are going to prove another characterization of Bernstein spaces. In the case of Hilbert spaces corresponding result was proved in \cite{Pes08a}, \cite{PZ}.

\begin{theorem}\label{new}
Let $f\in E$ belongs to a space  $\mathbf{B}_{\sigma}(D),$ for some
$0<\sigma<\infty.$ Then the following limit exists
 \begin{equation}
 d_f=\lim_{k\rightarrow \infty} \|D^k
f\|^{1/k} \label{limit}
\end{equation}
and $d_f=\sigma_f.$ 

Conversely, if
$f\in \mathcal{D}^{\infty}$ and $d_f=\lim_{k\rightarrow \infty}
\|D^k f\|^{1/k},$ exists and is finite, then $f\in\mathbf{B}_{d_f}(D)$
and $d_f=\sigma_f .$

\end{theorem}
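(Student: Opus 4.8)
The plan is to establish the two halves separately, but to note first that both directions rest on the single key fact, provided by Lemma~\ref{basic} together with Theorem~\ref{t1}, that membership in a Bernstein space is governed by the growth rate of $\|D^k f\|$, and that the relevant "sharp rate" is exactly $\sigma_f$. Throughout I would keep in mind the elementary two-sided estimate: if $\|D^k f\|\le M\rho^k$ for all $k$ then $\limsup_k\|D^k f\|^{1/k}\le\rho$; conversely a lower bound of the form $\|D^{k}f\|\ge c\rho^{k}$ infinitely often forces $\limsup_k\|D^kf\|^{1/k}\ge\rho$. So the content is to show that the $\limsup$ is actually a genuine limit and to pin it to $\sigma_f$.

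First I would prove the converse direction, since it is the more routine one. Assume $f\in\mathcal{D}^{\infty}$ and $d_f=\lim_k\|D^kf\|^{1/k}$ exists and is finite. For any $\varepsilon>0$ there is $N$ with $\|D^kf\|^{1/k}\le d_f+\varepsilon$ for $k\ge N$, hence $\|D^kf\|\le (d_f+\varepsilon)^k$ for large $k$, while the finitely many remaining $k<N$ are absorbed into a constant $C(f,\varepsilon)$; thus $\|D^kf\|\le C(f,\varepsilon)(d_f+\varepsilon)^k$ for all $k$, i.e.\ $f\in\mathbf{E}_{d_f+\varepsilon}(D)=\mathbf{B}_{d_f+\varepsilon}(D)$ by Theorem~\ref{t1}. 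Since $\varepsilon$ is arbitrary and the spaces $\mathbf{B}_{\sigma}(D)$ increase with $\sigma$, this places $f$ in $\bigcap_{\varepsilon>0}\mathbf{B}_{d_f+\varepsilon}(D)$; I would then argue that this intersection equals $\mathbf{B}_{d_f}(D)$, either by a direct limiting argument on the defining inequality $\|D^kf\|\le(d_f+\varepsilon)^k\|f\|$ (let $\varepsilon\to0$ for each fixed $k$), or by invoking the closedness of the Bernstein classes. In particular $\sigma_f\le d_f$; combined with $d_f\le\sigma_f$ below this gives $d_f=\sigma_f$.

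For the forward direction, suppose $f\in\mathbf{B}_{\sigma}(D)$ with $0<\sigma<\infty$. By definition $\sigma_f\le\sigma<\infty$, and from $\|D^kf\|\le\sigma_f^{\,k}\|f\|$ we immediately get $\limsup_k\|D^kf\|^{1/k}\le\sigma_f$. The crux is the matching lower bound $\liminf_k\|D^kf\|^{1/k}\ge\sigma_f$, which simultaneously forces the limit to exist and to equal $\sigma_f$. I expect this to be the main obstacle. The natural route is by contradiction: if $\liminf_k\|D^kf\|^{1/k}=d<\sigma_f$, pick $\rho$ with $d<\rho<\sigma_f$; then $\|D^{k_j}f\|\le\rho^{k_j}$ along a subsequence. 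The goal is to upgrade this subsequential bound to a bound for \emph{all} $k$, which would give $f\in\mathbf{B}_{\rho'}(D)$ for some $\rho'<\sigma_f$, contradicting the minimality in the definition of $\sigma_f$. To do the upgrading I would use the interpolation/Bernstein-type machinery already available: since $f\in\mathbf{B}_{\sigma}(D)$, each $D^rf$ also lies in $\mathbf{B}_{\sigma}(D)$, and the Boas-type formula $D^rf=\mathcal{B}_D^{(r)}(\sigma)f$ together with the norm bounds \eqref{norms} expresses intermediate derivatives in terms of the group action; alternatively, the scalarization $t\mapsto\langle e^{tD}f,h^*\rangle$ is entire of exponential type $\le\sigma$ and bounded on $\mathbb{R}$, so a Phragmén–Lindelöf / Bernstein argument on this scalar function, using that its derivatives at $0$ are small along a subsequence, should force its type down to $\rho$, hence $\|D^kf\|\le\rho^k\|f\|$ for all $k$ after optimizing over $h^*$ as in the proof of Lemma~\ref{basic}. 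Either way one contradicts the definition of $\sigma_f$, so $\liminf_k\|D^kf\|^{1/k}\ge\sigma_f$, and the limit $d_f$ exists with $d_f=\sigma_f$. Finally, assembling the two directions yields $d_f=\sigma_f$ in all cases, completing the proof.
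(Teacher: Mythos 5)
Your converse direction is sound and, if anything, more careful than the paper's: the paper only observes $f\in\mathbf{B}_{\sigma}(D)$ for every $\sigma>d_f$ and asserts $d_f=\sigma_f$, whereas your limiting argument $\|D^kf\|\le(d_f+\varepsilon)^k\|f\|\to d_f^k\|f\|$ for each fixed $k$ actually lands $f$ in $\mathbf{B}_{d_f}(D)$ as claimed. (One note: you need Lemma \ref{basic}, not just Theorem \ref{t1}, to pass from $\|D^kf\|\le C(f,\varepsilon)(d_f+\varepsilon)^k$ to the constant-free bound $\|D^kf\|\le(d_f+\varepsilon)^k\|f\|$; that is exactly what the lemma is for.)

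The forward direction, however, has a genuine gap at precisely the point you flag as "the main obstacle," and neither of your two proposed repairs works as stated. The Phragm\'en--Lindel\"of/Bernstein route fails because the exponential type of $F(z)=\left<e^{zD}f,h^{*}\right>$ equals $\limsup_k|F^{(k)}(0)|^{1/k}$, which is completely insensitive to a subsequence of small derivatives at the origin ($\cos z$ has every odd derivative vanishing at $0$ yet has type $1$); smallness of $\|D^{k_j}f\|$ along a subsequence therefore cannot "force the type down." The Boas-formula route with the norm bounds (\ref{norms}) only reproduces the upper bound $\|D^rf\|\le\sigma^r\|f\|$ you already have, since the formula is only valid at the type $\sigma$ you start with. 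The missing tool is the Stein--Kolmogorov (Landau--Kolmogorov) inequality, Lemma \ref{LKS} of the paper: $\|D^{k}f\|^{n}\le C_{k,n}\|D^{n}f\|^{k}\|f\|^{n-k}$ for $0\le k\le n$, obtained by applying the scalar Kolmogorov inequality on all of $\mathbb{R}$ (not just at $t=0$) to $\left<e^{tD}f,h^{*}\right>$ and then dualizing. This inequality bounds \emph{lower} derivatives by \emph{higher} ones, which is exactly the upgrading you need; in fact it yields $\|D^{k}f\|^{1/k}\le C_{k,n}^{1/(kn)}\|D^{n}f\|^{1/n}$ (for $\|f\|=1$), hence $\limsup_k\|D^kf\|^{1/k}\le\liminf_n\|D^nf\|^{1/n}$ directly, so the limit exists with no contradiction argument at all. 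The contradiction argument is then only needed, as in the paper, to rule out $d_f<\sigma_f$ — and there it is unproblematic because $d_f$ being a genuine limit gives $\|D^kf\|\le M\sigma^k$ for \emph{all} $k$, so Lemma \ref{basic} applies.
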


Let us introduce the Favard constants (see \cite{Akh}, Ch. V)
which are defined as
$$
K_{j}=\frac{4}{\pi}\sum_{r=0}^{\infty}\frac{(-1)^{r(j+1)}}{(2r+1)^{j+1}},\>\>\>
j,\>\>r\in \mathbb{N}.
$$
It is known \cite{Akh}, Ch. V, that the sequence of all Favard
constants with even indices is strictly increasing and belongs to
the interval $[1,4/ \pi)$ and the sequence of all Favard constants
with odd indices is strictly decreasing and belongs to the
interval $(\pi/4, \pi/2],$ i.e.,
\begin{equation}
K_{2j}\in [1,4/ \pi), \; K_{2j+1}\in (\pi/4, \pi/2].\label{Fprop}
\end{equation}
We will need the following generalization of the classical
Kolmogorov inequality. It is worth noting that the inequality was
first proved by Kolmogorov for $L^\infty (\mathbb{R})$ and later extended
to $L^p(\mathbb{R})$ for $1\leq p < \infty$ by Stein \cite{Stein} and that
is why it is known as the Stein-Kolmogorov inequality.
\begin{lemma} Let $f\in \mathcal{D}^{\infty} .$ Then,
the following inequality holds
\begin{equation}
\left\|D^{k}f\right\|^n \leq C_{k,n}\|D^{n}f\|^{k}\|f\|^{n-k},\>\>\>
0\leq k \leq n,\label{KS}
\end{equation}
where $C_{k,n}= (K_{n-k})^n/(K_{n})^{n-k}.$ \label{LKS}
\end{lemma}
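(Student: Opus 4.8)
The plan is to reduce the abstract inequality to the classical scalar Stein--Kolmogorov inequality on the real line by passing to the orbit map of the group. Fix $f\in\mathcal{D}^{\infty}$ and an arbitrary functional $h^{*}\in E^{*}$ with $\|h^{*}\|=1$, and set $\phi(t)=\left<e^{tD}f,\,h^{*}\right>$ for $t\in\mathbb{R}$. Since $f\in\mathcal{D}^{n}$, the standard theory of strongly continuous groups (see \cite{BB}, \cite{K}) shows that $\phi$ is $n$ times continuously differentiable on all of $\mathbb{R}$ with $\phi^{(j)}(t)=\left<e^{tD}D^{j}f,\,h^{*}\right>$ for $0\le j\le n$.

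The key point is that, because each $e^{tD}$ is an isometry, every derivative of $\phi$ is \emph{globally bounded}: one has $|\phi^{(j)}(t)|\le\|e^{tD}D^{j}f\|\,\|h^{*}\|=\|D^{j}f\|$ for all $t$, so $\|\phi^{(j)}\|_{L^{\infty}(\mathbb{R})}\le\|D^{j}f\|$, and in particular $\|\phi\|_{L^{\infty}(\mathbb{R})}\le\|f\|$. Notice that this uses nothing about $f$ beyond membership in $\mathcal{D}^{\infty}$; no exponential-type or Bernstein-space hypothesis enters, which is precisely why the lemma is valid for every $f\in\mathcal{D}^{\infty}$.

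Next I would invoke the classical Kolmogorov inequality for functions in $L^{\infty}(\mathbb{R})$ with its sharp Favard constants, in the normalization consistent with \eqref{Fprop}, applied to the scalar function $\phi$: for $0\le k\le n$,
\[
\|\phi^{(k)}\|_{L^{\infty}(\mathbb{R})}^{\,n}\;\le\;\frac{(K_{n-k})^{n}}{(K_{n})^{n-k}}\,\|\phi\|_{L^{\infty}(\mathbb{R})}^{\,n-k}\,\|\phi^{(n)}\|_{L^{\infty}(\mathbb{R})}^{\,k}.
\]
Combining this with the bounds of the previous step and evaluating at $t=0$ gives $\left|\left<D^{k}f,\,h^{*}\right>\right|^{n}\le C_{k,n}\,\|D^{n}f\|^{k}\|f\|^{n-k}$ with $C_{k,n}=(K_{n-k})^{n}/(K_{n})^{n-k}$.

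Finally I would take the supremum over all $h^{*}\in E^{*}$ with $\|h^{*}\|=1$; by the Hahn--Banach theorem this supremum equals $\|D^{k}f\|$, so the desired inequality $\|D^{k}f\|^{n}\le C_{k,n}\|D^{n}f\|^{k}\|f\|^{n-k}$ follows. The only steps that call for any care are the differentiability of the orbit map $t\mapsto e^{tD}f$ together with the stated formula for its derivatives (standard semigroup theory), and quoting the scalar Kolmogorov inequality in the correct normalization so that the constant comes out exactly as $C_{k,n}$; beyond these bookkeeping matters there is no substantive obstacle, since the isometry property does all the real work.
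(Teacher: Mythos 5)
Your proof is correct and follows essentially the same route as the paper: apply the scalar Kolmogorov--Stein inequality to the orbit function $\left<e^{tD}f,h^{*}\right>$, use the isometry property to bound its derivatives by $\|D^{j}f\|$, evaluate at $t=0$, and recover $\|D^{k}f\|$ by duality (the paper picks a norming functional where you take a supremum via Hahn--Banach, which is the same thing). If anything, your version is slightly cleaner in that it only invokes $C^{n}$ regularity of the orbit rather than calling the function ``entire,'' which need not hold for general $f\in\mathcal{D}^{\infty}$.
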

\begin{proof}
Indeed, for any $h^{*}\in E^{*}$ the Kolmogorov inequality \cite {Stein}
applied to the entire function $\left<e^{tD}f,h^{*}\right>$ gives
$$
\left\|\left(\frac{d}{dt}\right)^{k}\left<e^{tD}f,h^{*}\right>\right\|^n_{
C(\mathbb{R}^{1})}\leq
C_{k,n}\left\|\left(\frac{d}{dt}\right)^{n}\left<e^{tD}f,h^{*}\right>\right\|_{
C(\mathbb{R}^{1})}^{k}\times
$$
$$
\left\|\left<e^{tD}f,h^{*}\right>\right\|_{C(\mathbb{R}^{1})}^{n-k},\quad
0<k< n,
$$
or
$$
\left\|\left<e^{tD}D^{k}f,h^{*}\right>\right\|_{ C(\mathbb{R}^{1})}^n\leq
C_{k,n}\left\|\left<e^{tD}D^{n}f,h^{*}\right>\right\|_{
C(\mathbb{R}^{1})}^{k}\left\|\left<e^{tD}f,h^{*}\right>\right\|_{C(\mathbb{R}^{1})}^{n-k}.
$$
Applying the Schwartz inequality to the right-hand side, we obtain
\begin{align*}
\left\|\left<e^{tD}D^{k}f,h^{*}\right>\right\|_{ C(\mathbb{R}^{1})}^n & \leq
C_{k,n}\|h^{*}\|^{k}\|D^{n}f\|^{k}\|h^{*}\|^{n-k}\|f\|^{n-k}\\
&\leq C_{k,n}\|h^{*}\|^n \|D^{n}f\|^{k}\|f\|^{n-k},
\end{align*}
which, when $t=0,$ yields
$$
\left|\left<D^{k}f,h^{*}\right>\right|^n\leq C_{k,n}\|h^{*}\|^n
\|D^{n}f\|^{k}\|f\|^{n-k}.
$$
By choosing $h$ such that $\left|<D^{k}f,h^{*}>\right|=\|D^{k}f\|$ and
$\|h^{*}\|=1$  we obtain (\ref{KS}).
\end{proof}

{\bf Proof of Theorem \ref{new}.}

From Lemma \ref{LKS} we have $$ \left\|D^{k}f\right\|^n \leq
C_{k,n}\|D^{n}f\|^{k}\|f\|^{n-k}, \quad 0\leq k \leq n.
$$
Without loss of generality, let us assume that $\|f\|=1.$ Thus,
$$ \left\|D^{k}f\right\|^{1/k} \leq
(\pi/2)^{1/kn}\|D^{n}f\|^{1/n}, \quad 0\leq k \leq n.
$$
Let $k$ be arbitrary but fixed. It follows that
$$\left\|D^{k}f\right\|^{1/k} \leq
(\pi/2)^{1/kn}\|D^{n}f\|^{1/n}, \mbox{ for all } n\geq k,$$ which
implies that
$$\left\|D^{k}f\right\|^{1/k}\leq \underline{\lim}_{n\rightarrow
\infty}\|D^{n}f\|^{1/n}.$$ But \rm since this inequality is true for
all $k>0,$ we obtain that
$$\overline{\lim}_{k\rightarrow\infty}\|D^{k}f\|^{1/k}\leq
\underline{\lim}_{n\rightarrow \infty}\|D^{n}f\|^{1/n},$$ which
proves that $d_f=\lim_{k\rightarrow}\|D^{k}f\|^{1/k}$ exists.

Since $f\in \mathbf{B}_{\sigma}(D) $ the constant $\sigma_{f}$ is finite and we have 
$$
\|D^{k}f\|^{1/k}\leq \sigma_f  \|f\|^{1/k},
$$
 and by taking the
limit as $k\rightarrow\infty$ we obtain $d_f\leq \sigma_f.$ To show
that $d_f= \sigma_f,$ let us assume that $d_f< \sigma_f.$
Therefore, there exist $M>0$ and $\sigma $ such that $0<d_f<\sigma
< \sigma_f$ and $$\|D^k f\|\leq M \sigma^k, \quad \mbox{for all }
k>0 .$$ Thus, by Lemma  \ref{basic} we have $f\in \mathbf{B}_\sigma(D) ,$
which is a contradiction to the definition of $\sigma_f.$

Conversely,  suppose that $d_f=\lim_{k\rightarrow \infty} \|D^k
f\|^{1/k}$ exists and is finite. Therefore, there exist $M>0$ and
$\sigma
>0$ such that $d_f<\sigma $ and
$$\|D^k f\|\leq M \sigma^k, \quad \mbox{for all } k>0 ,$$
which, in view of Lemma \ref{basic}, implies that $f\in
\mathbf{B}_{\sigma}(D) .$ Now by repeating the argument in the first part of
the proof we obtain $d_f=\sigma_f ,$ where $\sigma_f=\inf\left\{
\sigma : f\in \mathbf{B}_{\sigma}(D)\right\}.$

Theorem \ref{new} is proved.

Now consider the following abstract Cauchy problem for the operator $D$. 
\begin{equation}
\frac{d u(t)}{d t}=Du(t), \>\>u(0)=f,\label{Cauchy2}
\end{equation}
where $u: \mathbb{R} \rightarrow E$ is an abstract function with values in
$E.$
Since solutions of this problem given by the formula $u(t)=e^{tD}f$ we obtain the following result.

\begin{theorem}
A vector $f\in E,$ belongs to $\mathbf{B}_{\sigma}(D)$  if and only if the
solution $u(t)$ of the corresponding Cauchy problem
(\ref{Cauchy2}) has the following properties:

1) as a function of $t,$ it has an analytic extension $u(z), z\in
\mathbb{C}$ to the complex plane $\mathbb{C}$ as an entire
function;

2) it has exponential type $\sigma$ in the variable $z$, that is
$$
\|u(z)\|_{E}\leq e^{\sigma|z|}\|f\|_{E}.
$$ \label{MainThm}
and it is bounded on the real line. \label{PW2}
\end{theorem}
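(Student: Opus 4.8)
The plan is to read this theorem as the ``Cauchy problem'' reformulation of membership in $\mathbf{B}_{\sigma}(D)$ and to prove the two implications with the devices already assembled above: the isometry property $\|e^{tD}f\|=\|f\|$, a Taylor series estimate in one direction, and the scalarization‑plus‑Bernstein argument of Lemma~\ref{basic} in the other. Throughout I use that the solution of (\ref{Cauchy2}) is $u(t)=e^{tD}f$.

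For the direct implication, suppose $f\in\mathbf{B}_{\sigma}(D)$, so $f\in\mathcal{D}^{\infty}$ and $\|D^{k}f\|\le\sigma^{k}\|f\|$ for every $k$. I would set $u(z)=\sum_{k\ge0}\frac{z^{k}}{k!}D^{k}f$; the bound $\|z^{k}D^{k}f/k!\|\le|z|^{k}\sigma^{k}\|f\|/k!$ shows the series converges absolutely and uniformly on compact subsets of $\mathbb{C}$, so $u$ is an entire $E$‑valued function, it coincides with $e^{tD}f$ on $\mathbb{R}$ (the same series representation of the group on entire vectors already used in the proof of Lemma~\ref{basic}), and summing the majorant gives $\|u(z)\|\le e^{\sigma|z|}\|f\|$, i.e.\ exponential type $\sigma$. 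Boundedness on the real line is immediate, since $\|u(t)\|=\|e^{tD}f\|=\|f\|$.

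For the converse, assume $u$, the analytic extension of $t\mapsto e^{tD}f$, is entire with $\|u(z)\|\le e^{\sigma|z|}\|f\|$. I would first bootstrap regularity without ever evaluating $e^{zD}$ at non‑real $z$: since $u$ is complex differentiable at $0$, the limit $u'(0)=\lim_{h\to0}\bigl(u(h)-u(0)\bigr)/h=\lim_{h\to0}\bigl(e^{hD}f-f\bigr)/h$ exists in $E$, which is precisely the assertion $f\in\mathcal{D}(D)$ with $Df=u'(0)$; then for real $t$ the factorization $\bigl(u(t+h)-u(t)\bigr)/h=e^{tD}\bigl(e^{hD}f-f\bigr)/h$ together with the boundedness of $e^{tD}$ gives $u'(t)=e^{tD}Df$, so $u'$ is the analytic extension of $t\mapsto e^{tD}(Df)$, and iterating yields $f\in\mathcal{D}^{\infty}$ with $u^{(k)}(0)=D^{k}f$. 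Now fix $h^{*}\in E^{*}$ with $\|h^{*}\|\le1$ and put $\phi(z)=\langle u(z),h^{*}\rangle$: this scalar entire function satisfies $|\phi(z)|\le e^{\sigma|z|}\|f\|$ (hence has exponential type at most $\sigma$) and $|\phi(t)|\le\|e^{tD}f\|=\|f\|$ on $\mathbb{R}$, so the Bernstein inequality gives $|\phi^{(k)}(0)|\le\sigma^{k}\|f\|$. Since $\phi^{(k)}(0)=\langle D^{k}f,h^{*}\rangle$, taking the supremum over $\|h^{*}\|\le1$ via Hahn--Banach produces $\|D^{k}f\|\le\sigma^{k}\|f\|$, i.e.\ $f\in\mathbf{B}_{\sigma}(D)$.

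The step I expect to need the most care is the regularity bootstrap in the converse: one must resist the temptation to write $u(z)=e^{zD}f$ for complex $z$ (the group need not extend there) and instead extract $f\in\mathcal{D}(D^{k})$ purely from the differentiability of the real trajectory forced by the analytic extension, pushing limits through the bounded operators $e^{tD}$. After that, the argument is the scalarization of Lemma~\ref{basic} almost verbatim. As an alternative, one could note that properties 1)--2) together with boundedness on $\mathbb{R}$ are exactly condition (2) of the earlier equivalence theorem and invoke it directly, but spelling out both implications keeps the proof self‑contained.
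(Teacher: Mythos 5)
Your proof is correct, but it is considerably more than the paper offers: the paper states this theorem as an immediate consequence of the observation that $u(t)=e^{tD}f$, so that properties 1) and 2) are just a restatement of condition (2) of the earlier (unproved, cited) equivalence theorem for $\mathbf{B}_{\sigma}(D)$ --- exactly the shortcut you mention in your last sentence. What you add is a self-contained two-directional argument built from the paper's own tools: the forward direction is the Taylor-series majorization $\left\|\sum z^{k}D^{k}f/k!\right\|\leq e^{\sigma|z|}\|f\|$ that already appears inside the proof of Lemma~\ref{basic}, and the converse is the scalarization-plus-Bernstein argument of that same lemma, preceded by a regularity bootstrap ($u'(0)$ exists in norm $\Rightarrow f\in\mathcal{D}(D)$, then iterate through the bounded operators $e^{tD}$ to get $f\in\mathcal{D}^{\infty}$ with $u^{(k)}(0)=D^{k}f$) that the paper never spells out anywhere but that is genuinely needed if one does not want to presuppose $f\in\mathcal{D}^{\infty}$ in the converse. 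Your caution about not writing $e^{zD}f$ for non-real $z$ is well placed and is precisely the point where a careless version of this argument would go wrong; the only small thing to make explicit in the forward direction is why the entire function $\sum z^{k}D^{k}f/k!$ agrees with $e^{tD}f$ on $\mathbb{R}$ (uniqueness for the Cauchy problem, using that $D$ is closed so that it can be passed through the convergent series), which the paper itself uses tacitly in Lemma~\ref{basic}.
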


\section{Sampling-type formulas for one-parameter groups}\label{reg}

We assume that $D$ generates one-parameter strongly cononuous bounded group of operators $e^{tD}, \>\>t\in \mathbb{R},$ in a Banach space $E$. In this section we prove explicit formulas for a  trajectory $e^{tD}f$ with $f\in \mathbf{B}_{\sigma}(D)$ in terms of a countable number of equally spaced samples. 

\begin{theorem}
If $f\in \mathbf{B}_{\sigma}(D)$ then the following sampling formulas hold for $t\in \mathbb{R}$
\begin{equation}\label{s1}
e^{tD}f=f+tDf \rm sinc\left(\frac{\sigma t}{\pi}\right)+t\sum_{k\neq 0}\frac{e^{\frac{k\pi}{\sigma}D}f-f}{\frac{k\pi}{\sigma}} \rm sinc\left(\frac{\sigma t}{\pi}-k\right),
\end{equation}
 
\begin{equation}\label{l0}
f=e^{tD}f-t \left(e^{t D}Df\right)\rm sinc\left(\frac{\sigma t}{\pi}\right)-t\sum_{k\neq 0}\frac{e^{\left(   \frac{k\pi}{\sigma}+t  \right)D}f-e^{tD}f}{\frac{k\pi}{\sigma}}    \rm sinc\left(\frac{\sigma t}{\pi}+k\right).
\end{equation}
\end{theorem}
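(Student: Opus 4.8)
The plan is to reduce the abstract sampling identity to the classical Boas-type sampling formula for scalar entire functions of exponential type, applied weakly through functionals $g^{*}\in E^{*}$, and then lift the pointwise scalar identities to an identity of $E$-valued functions by a Hahn–Banach argument.

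First I would fix $f\in\mathbf{B}_{\sigma}(D)$ and a functional $g^{*}\in E^{*}$, and consider the scalar function $\varphi(t)=\langle e^{tD}f,\,g^{*}\rangle$. By the equivalence theorem quoted above (item (3) in the theorem characterizing $\mathbf{B}_{\sigma}(D)$), $\varphi$ is an entire function of exponential type $\sigma$, bounded on $\mathbb{R}$ by $\|g^{*}\|\,\|f\|$; hence $\varphi\in\mathbb{B}^{\infty}_{\sigma}(\mathbb{R})$. Now I would invoke the classical sampling/interpolation result for the Bernstein space $\mathbb{B}^{\infty}_{\sigma}$ — a Valiron–Tschakaloff–type formula with the sampling points $\{k\pi/\sigma\}_{k\in\mathbb{Z}}$ in the "differentiated" form that keeps $\varphi'(0)$ as a separate term: for every $t\in\mathbb{R}$,
\begin{equation*}
\varphi(t)=\varphi(0)+t\,\varphi'(0)\,\mathrm{sinc}\!\left(\tfrac{\sigma t}{\pi}\right)+t\sum_{k\neq 0}\frac{\varphi(k\pi/\sigma)-\varphi(0)}{k\pi/\sigma}\,\mathrm{sinc}\!\left(\tfrac{\sigma t}{\pi}-k\right),
\end{equation*}
with absolute (indeed uniform on $\mathbb{R}$) convergence of the series, the bound on the tail coming from $|\varphi(k\pi/\sigma)-\varphi(0)|\le 2\|g^{*}\|\,\|f\|$ together with $\sum_{k\neq 0}|\mathrm{sinc}(\sigma t/\pi-k)/k|<\infty$. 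Translating $\varphi(0)=\langle f,g^{*}\rangle$, $\varphi'(0)=\langle Df,g^{*}\rangle$, and $\varphi(k\pi/\sigma)=\langle e^{k\pi D/\sigma}f,\,g^{*}\rangle$ gives exactly formula (\ref{s1}) tested against $g^{*}$. Since $g^{*}$ was arbitrary and the right-hand side of (\ref{s1}) is a norm-convergent series in $E$ — convergence following from $\|e^{k\pi D/\sigma}f-f\|\le 2\|f\|$, the isometry property, and $\sum_{k\neq 0}|\mathrm{sinc}(\sigma t/\pi-k)/k|<\infty$ — the Hahn–Banach theorem upgrades the scalar identity to the vector identity (\ref{s1}) in $E$.

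For (\ref{l0}) the cleanest route is to apply (\ref{s1}) not to $f$ itself but to the shifted vector $g=e^{tD}f$ (for fixed $t$, viewed as the base point), which again lies in $\mathbf{B}_{\sigma}(D)$ with the same bound since $e^{sD}$ is an isometry and $D$ commutes with $e^{tD}$ on $\mathcal{D}^{\infty}$. Writing (\ref{s1}) with $f$ replaced by $e^{tD}f$ and the running parameter replaced by $-t$, using $e^{-tD}(e^{tD}f)=f$, $D(e^{tD}f)=e^{tD}Df$, $e^{k\pi D/\sigma}(e^{tD}f)=e^{(k\pi/\sigma+t)D}f$, and the evenness $\mathrm{sinc}(-\sigma t/\pi)=\mathrm{sinc}(\sigma t/\pi)$, $\mathrm{sinc}(-\sigma t/\pi-k)=\mathrm{sinc}(\sigma t/\pi+k)$, yields precisely (\ref{l0}) after rearranging signs; the term $-t\,(e^{tD}Df)\,\mathrm{sinc}(\sigma t/\pi)$ is the image of $t\,\varphi'(0)\,\mathrm{sinc}$ under $t\mapsto -t$. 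The same Hahn–Banach passage from scalar to vector identities applies verbatim.

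The one genuinely non-routine point, and the step I would be most careful about, is \emph{which} classical scalar sampling theorem to cite and in exactly what form: the statement must hold for all of $\mathbb{B}^{\infty}_{\sigma}(\mathbb{R})$ (no decay at infinity), must produce unconditional convergence so that the Hahn–Banach step is legitimate, and must be in the "$\varphi(0)$-plus-differentiated-sum" shape that matches (\ref{s1})–(\ref{l0}). This is exactly the content of the Boas-type formulas from \cite{BSS} (equivalently classical Valiron/Tschakaloff interpolation, the $m=1$ analogue of (\ref{B1})), so I would state it as a lemma with the reference and verify the tail estimate $|\langle e^{k\pi D/\sigma}f-f,\,g^{*}\rangle|/|k|\le 2\|g^{*}\|\,\|f\|/|k|$ feeds the known summability of $\{\mathrm{sinc}(\sigma t/\pi-k)/k\}$; everything else is bookkeeping with the group law and the isometry hypothesis. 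An alternative, if one prefers to stay entirely inside the paper's framework, is to note that $t\mapsto e^{tD}f$ is itself an $E$-valued entire function of exponential type $\sigma$ bounded on $\mathbb{R}$ and apply the vector-valued Boas formula directly; but the functional-reduction argument above is shorter and makes the convergence transparent.
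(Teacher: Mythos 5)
Your proposal is correct and follows essentially the same route as the paper: reduce to the scalar function $\langle e^{tD}f,g^{*}\rangle\in \mathbb{B}^{\infty}_{\sigma}(\mathbb{R})$, apply the classical scalar sampling identity, verify norm convergence of the $E$-valued series, lift by Hahn--Banach, and obtain (\ref{l0}) by substituting $e^{tD}f$ for $f$ and evaluating at $-t$. The only cosmetic difference is that the paper obtains your ``differentiated Valiron--Tschakaloff'' lemma by applying the cardinal (Shannon) series to the divided difference $F_{1}(t)=\bigl(F(t)-F(0)\bigr)/t\in \mathbb{B}^{2}_{\sigma}(\mathbb{R})$, which is exactly the same scalar formula you cite.
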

\begin{remark}

It is worth to note that if $\>\>\>t\neq  0,\>\>$ then right-hand side of (\ref{l0}) does not contain vector $f$ and we obtain a "linear combination"  of $f$ in terms of vectors  $e^{\left(   \frac{k\pi}{\sigma}+t  \right)D}f,\>\>\>k\in \mathbb{Z},$ and $e^{tD}Df$. 

\end{remark}
       \begin{proof}
       
         If $f\in \mathbf{B}_{\sigma}(D)$ then for any $g^{*}\in E^{*}$ the function $F(t)=\left<e^{tD}f,\>g^{*}\right>$ belongs to $B_{\sigma}^{\infty}(\mathbb{R})$.

We consider $F_{1}\in B_{\sigma}^{2}( \mathbb{R}),$ which is defined as follows.
If $t\neq 0$ then 
$$
F_{1}(t)=\frac{F(t)-F(0)}{t}=\left<\frac{e^{tD}f-f}{t},\>g^{*}\right>,
$$
and if $t=0$ then
$
F_{1}(t)=\frac{d}{dt}F(t)|_{t=0}=\left<Df,\>g^{*}\right>.
$
We have
$$
F_{1}(t)=\sum_{k}F_{1}\left(\frac{k\pi}{\sigma}\right)\> \rm sinc\left(\frac{\sigma t}{\pi}-k\right),
$$
which means that   for any $g^{*}\in E^{*}$ 
$$
\left<    \frac{e^{tD}f-f}{t},\>g^{*}    \right>=\sum_{k}\left<\frac{e^{\frac{k\pi}{\sigma}D}f-f}{\frac{k\pi}{\sigma}},\>g^{*}\right>\> \rm sinc\left(\frac{\sigma t}{\pi}-k\right).
$$
Since 
$$
\rm sinc ^{(n)} x=\sum_{j=0}^{n}C_{n}^{j}(\sin\>\pi x)^{(j)}\left(\frac{1}{\pi x}\right)^{(n-j)}=
$$
$$
\frac{(-1)^{n}n!}{\pi x^{n+1}}\sum_{j=0}^{n}\sin\left(\pi x+\frac{jx}{2}\right)\frac{(-1)^{j}(\pi x)^{j}}{j!}
$$
one has the estimate 
$$
|\rm sinc^{(n)}\>x|\leq \frac{C}{|x|},\>\>\>n=0,1,....
$$
which implies convergence in $E$ of the series 
$$
\sum_{k}\frac{e^{\frac{k\pi}{\sigma}D}f-f}{\frac{k\pi}{\sigma}} \rm sinc\left(\frac{\sigma t}{\pi}-k\right).
$$
It leads to the equality for any $g^{*}\in E^{*}$ 
$$
\left<    \frac{e^{tD}f-f}{t},\>g^{*}    \right>=\left<\sum_{k}\frac{e^{\frac{k\pi}{\sigma}D}f-f}{\frac{k\pi}{\sigma}} \rm sinc\left(\frac{\sigma t}{\pi}-k\right),\>g^{*}\right>,\>\>\>t\neq 0,
$$
and if $t= 0$ it gives the identity
$
\left<Df,\>g^{*}\right>=\left<Df,\>g^{*}\right>\sum_{k} \rm sinc \>k.
$
Thus,
\begin{equation}
   \frac{e^{tD}f-f}{t}=\sum_{k}\frac{e^{\frac{k\pi}{\sigma}D}f-f}{\frac{k\pi}{\sigma}} \rm sinc\left(\frac{\sigma t}{\pi}-k\right),\>\>\>t\neq 0,
\end{equation}
or for every $t\in \mathbb{R}$ 
$$
e^{tD}f=f+tDf \rm sinc\left(\frac{\sigma t}{\pi}\right)+t\sum_{k\neq 0}\frac{e^{\frac{k\pi}{\sigma}D}f-f}{\frac{k\pi}{\sigma}} \rm sinc\left(\frac{\sigma t}{\pi}-k\right).
$$

Thus, (\ref{s1}) is proved. 

If in (\ref{s1}) we replace $f$ by $\left(e^{\tau D}f\right)$ for a $\tau\in \mathbb{R}$ we will have

\begin{equation}\label{s100}
e^{tD}\left(e^{\tau D}f\right)=
$$
$$
\left(e^{\tau D}f\right)+t\sum_{k\neq 0}\frac{e^{\frac{k\pi}{\sigma}D}\left(e^{\tau D}f\right)-\left(e^{\tau D}f\right)}{\frac{k\pi}{\sigma}} \rm sinc\left(\frac{\sigma t}{\pi}-k\right)+tD\left(e^{\tau D}f\right)\rm sinc\left(\frac{\sigma t}{\pi}\right).
\end{equation}
For $t=-\tau $ we obtain the next formula which holds for any $\tau\in \mathbb{R},\>\>f\in \mathbf{B}_{\sigma}(D),$ 
\begin{equation}\label{l1}
f=e^{\tau D}f-\tau \sum_{k\neq 0}\frac{e^{\left(   \frac{k\pi}{\sigma}+\tau  \right)D}f-e^{\tau D}f}{\frac{k\pi}{\sigma}}    \rm sinc\left(\frac{\sigma \tau}{\pi}+k\right)-\tau D\left(e^{\tau D}f\right)\rm sinc\left(\frac{\sigma \tau}{\pi}\right),
\end{equation}
which is the formula (\ref{l0}). 
Theorem is proved. 
\end{proof}

The next Theorem is a generalization of what is known as Valiron-Tschakaloff
sampling/interpolation formula \cite{BFHSS}.

\begin{theorem}
For $f\in \mathbf{B}_{\sigma}(D),\>\>\>\sigma>0,$ we have for all $z \in \mathbb{C}$
\begin{equation}\label{VT}
e^{zD}f=z \>\rm sinc\left(\frac{\sigma  z}{\pi}\right)Df+\rm sinc\left(\frac{\sigma  z}{\pi}\right)f+\sum_{k\neq 0}\frac{\sigma z}{k\pi}\rm sinc\left(\frac{\sigma  z}{\pi}-k\right)e^{\frac{k\pi}{\sigma}D}f
\end{equation}
\end{theorem}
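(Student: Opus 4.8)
The plan is to reduce the abstract identity (\ref{VT}) to the classical Valiron--Tschakaloff formula for scalar entire functions of exponential type, exactly in the spirit of the proof of (\ref{s1}) given above. Fix $f\in\mathbf{B}_{\sigma}(D)$ and an arbitrary functional $g^{*}\in E^{*}$, and set $F(z)=\left<e^{zD}f,\>g^{*}\right>$. By Lemma \ref{basic} (or Theorem \ref{PW2}) $F$ is entire of exponential type $\sigma$ and bounded on $\mathbb{R}$, so the classical Valiron--Tschakaloff sampling/interpolation formula \cite{BFHSS} applies: for all $z\in\mathbb{C}$,
\begin{equation}\label{VTscalar}
F(z)=z\,\rm sinc\left(\frac{\sigma z}{\pi}\right)F'(0)+\rm sinc\left(\frac{\sigma z}{\pi}\right)F(0)+\sum_{k\neq 0}\frac{\sigma z}{k\pi}\rm sinc\left(\frac{\sigma z}{\pi}-k\right)F\!\left(\frac{k\pi}{\sigma}\right).
\end{equation}
Now identify the ingredients: $F(0)=\left<f,g^{*}\right>$, $F'(0)=\left<Df,g^{*}\right>$ since $t\mapsto e^{tD}f$ is differentiable with derivative $e^{tD}Df$, and $F(k\pi/\sigma)=\left<e^{\frac{k\pi}{\sigma}D}f,g^{*}\right>$. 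Thus the right-hand side of (\ref{VTscalar}) is $\left<R(z),g^{*}\right>$, where $R(z)$ is precisely the right-hand side of (\ref{VT}), provided the series there converges in $E$ so that the functional may be pulled out of the sum.

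The convergence of the $E$-valued series is the one genuine technical point, and it is handled just as in the proof of (\ref{s1}): from the bound $|\rm sinc^{(n)}x|\le C/|x|$ — here we only need $n=0$, i.e. $|\rm sinc\,x|\le C/|x|$ — we get $\left|\rm sinc\!\left(\frac{\sigma z}{\pi}-k\right)\right|\le C_{z}/|k|$ for $|k|$ large, with $C_z$ locally bounded in $z$. Since $\|e^{\frac{k\pi}{\sigma}D}f\|=\|f\|$ for every $k$, the general term of $\sum_{k\neq 0}\frac{\sigma z}{k\pi}\rm sinc\!\left(\frac{\sigma z}{\pi}-k\right)e^{\frac{k\pi}{\sigma}D}f$ is $O(1/k^{2})$ in norm, so the series converges absolutely in $E$, uniformly on compact subsets of $\mathbb{C}$. (One could alternatively invoke the $\ell^1$-type identities (\ref{id-2}) in the same way the boundedness of $\mathcal{B}_D^{(r)}(\sigma)$ was obtained, but the elementary $O(1/k^2)$ estimate suffices.)

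With convergence in hand, for every $z\in\mathbb{C}$ and every $g^{*}\in E^{*}$ we have $\left<e^{zD}f,g^{*}\right>=\left<R(z),g^{*}\right>$, and since $E^{*}$ separates points of $E$ this yields $e^{zD}f=R(z)$, which is exactly (\ref{VT}). The main obstacle is therefore essentially bookkeeping: verifying that the scalar Valiron--Tschakaloff formula (\ref{VTscalar}) is applicable with the stated normalization of $\rm sinc$ and the nodes $k\pi/\sigma$, and confirming the $E$-valued series converges so that the duality pairing commutes with the infinite sum; both are routine given Lemma \ref{basic}, the isometry property $\|e^{tD}f\|=\|f\|$, and the $\rm sinc$ decay estimate already established above. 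No new ideas beyond those used for (\ref{s1}) are needed, the only change being that the underlying scalar sampling theorem is the Valiron--Tschakaloff formula rather than the cardinal-series formula.
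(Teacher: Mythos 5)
Your proposal is correct and follows essentially the same route as the paper: apply the scalar Valiron--Tschakaloff formula to $F(z)=\left<e^{zD}f,\>g^{*}\right>$, which lies in the classical Bernstein space by Lemma \ref{basic}, then pass back to $E$ by verifying convergence of the vector-valued series and using that $E^{*}$ separates points. Your explicit $O(1/k^{2})$ estimate for the general term is a welcome elaboration of the convergence claim that the paper states without detail.
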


\begin{proof}
If $F\in \mathbf{B}_{\sigma}(D),\>\>\>\sigma>0,$ then for all $z \in \mathbb{C}$ the following 
Valiron-Tschakaloff sampling/interpolation formula holds \cite{BFHSS}

\begin{equation}
F(z)=z \>\rm sinc\left(\frac{\sigma  z}{\pi}\right)F^{'}(0)+\rm sinc\left(\frac{\sigma  z}{\pi}\right)F(0)+\sum_{k\neq 0}\frac{\sigma z}{k\pi}\rm sinc\left(\frac{\sigma  z}{\pi}-k\right)F\left(\frac{k\pi}{\sigma}\right)
\end{equation}
For $f\in  \mathbf{B}_{\sigma}(D),\>\>\>\sigma>0$ and $g^{*}\in E^{*}$ we have $F(z)=\left<e^{zD}f,\>g^{*}\right>\in  \mathbf{B}_{\sigma}(D)$ and thus

\begin{equation}
\left<e^{zD}f,\>g^{*}\right>=z \>\rm sinc\left(\frac{\sigma  z}{\pi}\right)\left<Df,\>g^{*}\right>+
$$
$$
\rm sinc\left(\frac{\sigma  z}{\pi}\right)\left<f,\>g^{*}\right>+\sum_{k\neq 0}\frac{\sigma z}{k\pi}\rm sinc\left(\frac{\sigma  z}{\pi}-k\right)\left<e^{\frac{k\pi}{\sigma}D}f,\>g^{*}\right>.
\end{equation}
Since the series 
$$
\sum_{k\neq 0}\frac{\sigma z}{k\pi}\rm sinc\left(\frac{\sigma  z}{\pi}-k\right)e^{\frac{k\pi}{\sigma}D}f
$$
converges in $E$ for every fixed $z$ we obtain the formula (\ref{VT}).

\end{proof}

\begin{theorem}
If $f\in \mathbf{B}_{\sigma}(D)$ then the following sampling formula holds for $t\in \mathbb{R}$ 
and  $n\in \mathbb{N}$
\begin{equation}\label{s2}
e^{tD}D^{n}f=\sum_{k}\frac{e^{\frac{k\pi}{\sigma}D}f-f}{\frac{k\pi}{\sigma}}\left\{  n \>\rm sinc^{(n-1)}\left(\frac{\sigma t}{\pi}-k\right)  +\frac{\sigma t}{\pi} \rm sinc^{(n)}\left(\frac{\sigma t}{\pi}-k\right)
   \right\}.
\end{equation}
In particular, for $n\in \mathbb{N}$
one has 
\begin{equation}\label{Q}
D^{n}f=\mathcal{Q}_{D}^{n}(\sigma)f,
\end{equation}
where the bounded operator $\mathcal{Q}_{D}^{n}(\sigma)$ is given by the formula
\begin{equation}
\mathcal{Q}_{D}^{n}(\sigma)f=n\sum_{k}\frac{e^{\frac{k\pi}{\sigma}D}f-f}{\frac{k\pi}{\sigma}}\left[   \rm sinc^{(n-1)}\left(-k\right)  +\rm sinc^{(n)}\left(-k\right)
   \right].
\end{equation}
\end{theorem}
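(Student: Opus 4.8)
The cleanest route is to differentiate the already proven formula (\ref{s1}). Rewrite (\ref{s1}) compactly as $e^{tD}f=f+t\sum_{k}c_{k}\operatorname{sinc}(\sigma t/\pi-k)$, where $c_{k}=(e^{(k\pi/\sigma)D}f-f)/(k\pi/\sigma)$ for $k\neq 0$ and $c_{0}=Df$, so that the $k=0$ summand recovers the isolated term $tDf\operatorname{sinc}(\sigma t/\pi)$ of (\ref{s1}). Since $f\in\mathbf{B}_{\sigma}(D)\subset\mathcal{D}^{\infty}$, the $E$-valued map $t\mapsto e^{tD}f$ is infinitely differentiable with $(d/dt)^{n}e^{tD}f=e^{tD}D^{n}f$, so (\ref{s2}) should follow simply by differentiating the right-hand side of (\ref{s1}) $n$ times, term by term.

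For the differentiation to be legitimate I would argue as follows. The isometry property $\|e^{tD}f\|=\|f\|$ gives $\|c_{k}\|\le 2\sigma\|f\|/(\pi|k|)$ for $k\neq 0$, and the estimate $|\operatorname{sinc}^{(m)}(x)|\le C/|x|$ recorded above shows that, for $t$ in any fixed compact set, $|\operatorname{sinc}^{(m)}(\sigma t/\pi-k)|\le C'/|k|$ for all large $|k|$; hence for each $m$ the $E$-valued series $\sum_{k}c_{k}\operatorname{sinc}^{(m)}(\sigma t/\pi-k)$ converges absolutely and locally uniformly, its $k$-th term being $O(1/k^{2})$ uniformly on compacts. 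Applying the classical theorem on differentiation of series of functions $n$ times in succession — each intermediate differentiated series meeting its hypotheses by the bound just noted — and then the Leibniz rule to $t\mapsto t\cdot(\text{that series})$, in which only the $j=0$ and $j=1$ contributions survive, one obtains $e^{tD}D^{n}f$ as a sum of two sampling series, that is, formula (\ref{s2}); here it suffices to keep track of the scaling factor $(\sigma/\pi)^{m}$ that the chain rule attaches to the $m$-th derivative of $\operatorname{sinc}(\sigma t/\pi-k)$. The same $O(1/k^{2})$ estimate shows the vector series on the right of (\ref{s2}) converges in $E$, the $k=0$ summand being read (as in the proof of (\ref{s1})) as $Df$.

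Finally, (\ref{Q}) and the boundedness of $\mathcal{Q}_{D}^{n}(\sigma)$ are obtained by specializing the value of $t$ in (\ref{s2}) so that $e^{tD}D^{n}f$ reduces to $D^{n}f$, and by reading off from $\|c_{k}\|\le 2\sigma\|f\|/(\pi|k|)$ together with $\operatorname{sinc}^{(m)}(-k)=O(1/|k|)$, so that $\sum_{k}\|c_{k}\|\,|\operatorname{sinc}^{(m)}(-k)|<\infty$, that the resulting series defines a bounded operator, with a norm bound of the same type as in (\ref{norms}). The only genuinely delicate point in all of this is the \emph{iterated} term-by-term differentiation of step two: one must check that every one of the $n$ intermediate differentiated series, not merely the last, converges locally uniformly in $t$, which is exactly what the polynomial denominators in $\operatorname{sinc}^{(m)}$ together with the extra factor $1/|k|$ in $c_{k}$ supply. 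No ingredient beyond the isometry property $\|e^{tD}f\|=\|f\|$ and the already established formula (\ref{s1}) is required.
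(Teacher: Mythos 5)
Your proof is correct and follows essentially the same route as the paper's: both rest on the Leibniz identity $\left(\frac{d}{dt}\right)^{n}\left[F(0)+tF_{1}(t)\right]=nF_{1}^{(n-1)}(t)+tF_{1}^{(n)}(t)$ applied to the cardinal series of $F_{1}$ — the paper carries this out scalar-wise for $F(t)=\left<e^{tD}f,g^{*}\right>$ with $F_{1}\in B_{\sigma}^{2}(\mathbb{R})$ and then passes back to $E$, while you differentiate the $E$-valued series of (\ref{s1}) directly, justifying term-by-term differentiation with the same $O(1/k^{2})$ decay estimates the paper uses to get convergence of its vector series. Your insistence on tracking the chain-rule factor $(\sigma/\pi)^{m}$ is well taken: done carefully, the computation yields an overall factor $(\sigma/\pi)^{n-1}$ that is missing from the displayed formula (\ref{s2}) (and at $t=0$ the $\mathrm{sinc}^{(n)}$ term is killed by the factor $\sigma t/\pi$, so the displayed $\mathcal{Q}_{D}^{n}(\sigma)$ does not literally agree with the specialization either), so on these points your version is the more accurate one.
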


\begin{proof}
Because $F_{1}\in B_{\sigma}^{2}(\mathbb{R})$ we have
$$
\left(\frac{d}{dt}\right)^{n}F_{1}(t)=\sum_{k}F_{1}\left(\frac{k\pi}{\sigma} \right) \rm sinc^{(n)}\left(\frac{\sigma t}{\pi}-k\right)
$$
and \rm since 
$$
\left(\frac{d}{dt}\right)^{n}F(t)=n\left(\frac{d}{dt}\right)^{n-1}F_{1}(t)+t\left(\frac{d}{dt}\right)^{n}F_{1}(t)
$$
we obtain 
$$
\left(\frac{d}{dt}\right)^{n}F(t)=n\sum_{k}F_{1}\left(\frac{k\pi}{\sigma}\right)\> \rm sinc^{(n-1)}\left(\frac{\sigma t}{\pi}-k\right)+\frac{\sigma t}{\pi}\sum_{k}F_{1}\left(\frac{k\pi}{\sigma}\right)\> \rm sinc^{(n)}\left(\frac{\sigma t}{\pi}-k\right)
$$
Because 
$
\left(\frac{d}{dt}\right)^{n}F(t)=\left<D^{n}e^{tD}f, g^{*}\right>,
$
and
$$
F_{1}\left(\frac{k\pi}{\sigma}\right)=\left<\frac{e^{\frac{k\pi}{\sigma}D}f-f}{\frac{k\pi}{\sigma}}, g^{*}\right>
$$
we obtain  that  for $t\in \mathbb{R},\>\>n\in \mathbb{N},$
$$
D^{n}e^{tD}f=\sum_{k}\frac{e^{\frac{k\pi}{\sigma}D}f-f}{\frac{k\pi}{\sigma}}\left[  n\> \rm sinc^{(n-1)}\left(\frac{\sigma t}{\pi}-k\right)  +\frac{\sigma t}{\pi} \rm sinc^{(n)}\left(\frac{\sigma t}{\pi}-k\right)
   \right].
$$ 
Theorem is proved. 
\end{proof}

\section{Irregular sampling theorems}\label{irreg}

In \cite{Hig} the following fact was proved.

\begin{theorem}\label{Hig}
Let $\{t_{n}\}_{n\in \mathbb{Z}}$ be a sequence of real numbers such that 
$$
\sup_{n\in\mathbb{Z}}|t_{n}-n|<1/4.
$$
Define the entire function 
$$
G(z)=(z-t_{0})\prod_{n=1}^{\infty}\left(1-\frac{z}{t_{n}}\right)\left(1-\frac{z}{t_{-n}}\right).
$$
 Then for all $f\in \mathbf{B}_{\pi}^{2}(\mathbb{R})$ we have
$$
f(t)=\sum_{n\in \mathbb{Z}}f(t_{n})\frac{G(t)}{G^{'}(t_{n})(t-t_{n})}
$$
uniformly on all compact subsets of $\mathbb{R}$.
\end{theorem}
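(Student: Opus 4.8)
The plan is to transfer everything to the Fourier side and invoke Kadec's $1/4$-theorem. Under the Paley--Wiener isomorphism identifying $\mathbf{B}_{\pi}^{2}(\mathbb{R})$ with $L^{2}[-\pi,\pi]$, a function $f\in\mathbf{B}_{\pi}^{2}(\mathbb{R})$ corresponds to $\widehat{f}\in L^{2}[-\pi,\pi]$ and, up to a fixed constant, each sample $f(t_{n})$ is the value of the exponential functional $u\mapsto\int_{-\pi}^{\pi}u(\xi)\,e^{it_{n}\xi}\,d\xi$ at $u=\widehat{f}$. The hypothesis $\sup_{n}|t_{n}-n|<1/4$ is precisely Kadec's condition, so the exponential system $\{e^{it_{n}\xi}\}_{n\in\mathbb{Z}}$ is a Riesz basis of $L^{2}[-\pi,\pi]$; in particular it is complete, hence $\{t_{n}\}_{n\in\mathbb{Z}}$ is a uniqueness set for $\mathbf{B}_{\pi}^{2}(\mathbb{R})$, and it possesses a unique biorthogonal Riesz basis $\{h_{n}\}_{n\in\mathbb{Z}}\subset L^{2}[-\pi,\pi]$. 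Expanding $\widehat{f}$ with respect to the dual basis and applying the inverse Fourier transform yields, for every $f\in\mathbf{B}_{\pi}^{2}(\mathbb{R})$, an unconditionally convergent expansion $f=\sum_{n\in\mathbb{Z}}f(t_{n})\,\phi_{n}$ in $L^{2}(\mathbb{R})$, where $\phi_{n}\in\mathbf{B}_{\pi}^{2}(\mathbb{R})$ is a fixed scalar multiple of the inverse transform of $h_{n}$.

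It remains to identify $\phi_{n}$ with the Lagrange-type function $\psi_{n}(t)=G(t)/\bigl(G'(t_{n})(t-t_{n})\bigr)$. Biorthogonality of $\{h_{n}\}$ translates into the interpolatory conditions $\phi_{n}(t_{m})=\delta_{nm}$ for all $m\in\mathbb{Z}$, and since $\{t_{n}\}$ is a uniqueness set a function in $\mathbf{B}_{\pi}^{2}(\mathbb{R})$ is determined by its values on $\{t_{n}\}$; thus it suffices to check that $\psi_{n}$ lies in $\mathbf{B}_{\pi}^{2}(\mathbb{R})$ and satisfies $\psi_{n}(t_{m})=\delta_{nm}$. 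The interpolation identities are immediate once one knows that $G$ is entire with simple zeros exactly at the points $t_{n}$. The membership $\psi_{n}\in\mathbf{B}_{\pi}^{2}(\mathbb{R})$ reduces to two properties of the canonical product $G$: it is entire of exponential type $\pi$, and it is bounded on the real line; granting these, the quotient $\psi_{n}$ (after the simple zero of $G$ at $t_{n}$ cancels the pole) is entire of exponential type $\pi$ and, owing to the factor $1/(t-t_{n})$, belongs to $L^{2}(\mathbb{R})$. Establishing these growth and boundedness bounds for $G$ is, I expect, the main obstacle: one compares $G$ factor by factor with $\dfrac{\sin\pi z}{\pi}=z\prod_{n\ge1}\bigl(1-z^{2}/n^{2}\bigr)$, the displacements $|t_{n}-n|<1/4$ contributing only a bounded correction of exponential type zero; this is the classical Levinson/Kadec estimate for perturbed canonical products.

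Finally, the $L^{2}(\mathbb{R})$-convergence of $\sum_{n}f(t_{n})\phi_{n}$ is upgraded automatically to uniform convergence on compact subsets of $\mathbb{R}$: every $g\in\mathbf{B}_{\pi}^{2}(\mathbb{R})$ obeys a Plancherel--P\'olya / reproducing-kernel bound $\sup_{t\in K}|g(t)|\le C_{K}\|g\|_{L^{2}(\mathbb{R})}$ on each compact $K$, so $L^{2}$-convergence of a sequence in $\mathbf{B}_{\pi}^{2}(\mathbb{R})$ forces locally uniform convergence, and the tails of the unconditionally convergent series are then controlled uniformly on $K$. Apart from the canonical-product estimate in the second step, the argument is soft: Paley--Wiener duality, the Riesz-basis machinery furnished by Kadec's theorem, and routine Lagrange-interpolation bookkeeping.
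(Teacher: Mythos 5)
The paper does not prove this theorem: it is imported verbatim from Higgins \cite{Hig} as a known scalar result, and the paper's own contribution begins only afterwards, when the formula is applied to the functions $t\mapsto\left<e^{tD}f,g^{*}\right>$. So there is no internal proof to compare against; your argument has to be judged against the classical literature, and there it is the standard route (Kadec's $1/4$-theorem giving a Riesz basis of exponentials, the biorthogonal system, identification of the dual functions with the Lagrange functions of the canonical product $G$, and Plancherel--P\'olya to upgrade $L^{2}$-convergence to locally uniform convergence). The outline is sound, but one intermediate claim is stated too strongly: for a general perturbation with $\Delta=\sup_{n}|t_{n}-n|<1/4$ the product $G$ need \emph{not} be bounded on the real line --- Levinson's estimates only give polynomial growth of order roughly $|x|^{2\Delta}$ --- and it is precisely the condition $2\Delta<1/2$ that still forces $G(x)/(x-t_{n})\in L^{2}(\mathbb{R})$. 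Since you explicitly flag the growth estimate for $G$ as the main obstacle and defer it (together with the nontrivial identification $\phi_{n}=\psi_{n}$, which also needs a lower bound on $|G|$ away from its zeros if done via a quotient/Phragm\'en--Lindel\"of argument) to the classical Levinson--Kadec machinery, this is an imprecision in a deferred step rather than a structural gap; with the boundedness claim replaced by the correct polynomial bound, the proof is the standard one and is correct.
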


An analog of this result for Banach spaces is the following. 

 \begin{theorem}
 If $D$ generates a one-parameter strongly continuous  group  $e^{tD}$ of isometries  in a Banach space $E$.  Suppose  that assumptions of  Theorem \ref{Hig} are satisfied and $t_{0}\neq 0$. Then for all $f\in \mathbf{B}_{\pi}(D),\>\>g^{*}\in E^{*}$ and every $t\in \mathbb{R}$  the following formulas hold
\begin{equation}\label{s3}
\left<e^{tD}f,\>g^{*}\right>=
  \left<f,\>g^{*}\right>+t\sum_{n\in \mathbb{Z}}\frac{\left<e^{t_{n}D}f,\>g^{*}\right>-\left<f,\>g^{*}\right>}{t_{n}}\frac{G(t)}{G^{'}(t_{n})(t-t_{n})},
\end{equation}
and 
\begin{equation}\label{l3000}
\left<f,g^{*}\right>=\left<e^{t D}f,\>g^{*}\right>+t\sum_{n\in \mathbb{Z}}\frac{\left<e^{(t_{n}-t)D}f,\>g^{*}\right>-\left<e^{t D}f,\>g^{*}\right>}{t_{n}}\frac{G(-t)}{G^{'}(t_{n})(t+t_{n})}
\end{equation}
uniformly on all compact subsets of $\mathbb{R}$.
\end{theorem}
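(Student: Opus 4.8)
The plan is to reduce the statement to Theorem \ref{Hig}, following exactly the route by which formulas (\ref{s1})--(\ref{l0}) were derived from the classical Whittaker--Kotel'nikov--Shannon expansion; the only change is that the regular sampling series is replaced by Higgins' irregular interpolation series. Fix $f\in\mathbf{B}_{\pi}(D)$ and $g^{*}\in E^{*}$. As was shown in the proof of (\ref{s1}), the scalar function $F(t)=\langle e^{tD}f,g^{*}\rangle$ belongs to $B_{\pi}^{\infty}(\mathbb{R})$. First I would introduce the difference quotient
$$
F_{1}(t)=\frac{F(t)-F(0)}{t}=\Bigl\langle\frac{e^{tD}f-f}{t},\,g^{*}\Bigr\rangle\ \ (t\neq0),\qquad F_{1}(0)=F'(0)=\langle Df,g^{*}\rangle ,
$$
and check that $F_{1}\in\mathbf{B}_{\pi}^{2}(\mathbb{R})$: the entire function $F-F(0)$ has exponential type $\pi$ and a simple zero at the origin, so $F_{1}$ is again entire of exponential type $\pi$; since moreover $F_{1}$ is bounded near $0$ and $|F_{1}(t)|\le(|F(t)|+|F(0)|)/|t|=O(|t|^{-1})$ as $|t|\to\infty$, we get $F_{1}\in L^{2}(\mathbb{R})$.

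Next I would note that the hypotheses $\sup_{n}|t_{n}-n|<1/4$ and $t_{0}\neq0$ force $t_{n}\neq0$ for all $n\in\mathbb{Z}$ (for $n\neq0$, $|t_{n}|\ge|n|-1/4\ge 3/4$), so that the samples $F_{1}(t_{n})=\bigl\langle(e^{t_{n}D}f-f)/t_{n},\,g^{*}\bigr\rangle$ are well defined. Applying Theorem \ref{Hig} to $F_{1}\in\mathbf{B}_{\pi}^{2}(\mathbb{R})$ then gives
$$
F_{1}(t)=\sum_{n\in\mathbb{Z}}F_{1}(t_{n})\,\frac{G(t)}{G'(t_{n})(t-t_{n})}
$$
uniformly on compact subsets of $\mathbb{R}$. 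Substituting the displayed expressions for $F_{1}(t)$ and $F_{1}(t_{n})$, multiplying through by $t$ (harmless for uniform convergence on compacta) and adding $\langle f,g^{*}\rangle$ gives (\ref{s3}); the value $t=0$ is trivial. If one prefers to read the series in (\ref{s3}) as a series of vectors in $E$, its absolute convergence in $E$ (for each fixed $t$) follows from Cauchy--Schwarz, since the sequence $\bigl(\|(e^{t_{n}D}f-f)/t_{n}\|\bigr)_{n}$ is square-summable ($O(1/|n|)$, using $\|e^{t_{n}D}f-f\|\le 2\|f\|$) and $\sum_{n}|G(t)|^{2}/|G'(t_{n})(t-t_{n})|^{2}<\infty$ by the usual Bessel bound for the Higgins interpolating functions.

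Finally, for (\ref{l3000}) I would apply the identity (\ref{s3}) just proved with $f$ replaced by the vector $e^{tD}f$ and then evaluate at the point $-t$ --- precisely the substitution by which (\ref{l0}) was obtained from (\ref{s1}). This is legitimate because $e^{tD}f\in\mathbf{B}_{\pi}(D)$ whenever $f\in\mathbf{B}_{\pi}(D)$: the operator $e^{tD}$ commutes with $D$ and is an isometry, so $\|D^{k}e^{tD}f\|=\|D^{k}f\|\le\pi^{k}\|f\|=\pi^{k}\|e^{tD}f\|$. Combining this with the group law $e^{sD}e^{tD}=e^{(s+t)D}$ and relabelling the running variable produces (\ref{l3000}), the uniform convergence on compacta being inherited from (\ref{s3}). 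The main point that needs care is the verification that $F_{1}$ belongs to $\mathbf{B}_{\pi}^{2}(\mathbb{R})$ so that Theorem \ref{Hig} applies (together with, for the vector-valued reading, the accompanying $\ell^{2}$ Bessel estimate); the rest is routine bookkeeping with the group law.
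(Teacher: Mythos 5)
Your argument is essentially identical to the paper's: form the difference quotient $F_{1}(t)=\langle (e^{tD}f-f)/t,\,g^{*}\rangle\in B_{\pi}^{2}(\mathbb{R})$, apply Higgins' irregular sampling theorem to it, and obtain the second formula by substituting $e^{tD}f$ for $f$ in the first and evaluating at $-t$; you merely supply some justifications (the $L^{2}$ membership of $F_{1}$, the fact that $t_{n}\neq 0$, the invariance of $\mathbf{B}_{\pi}(D)$ under $e^{tD}$) that the paper leaves implicit. One caveat: that substitution actually produces samples $\langle e^{(t_{n}+t)D}f,\,g^{*}\rangle$ rather than $\langle e^{(t_{n}-t)D}f,\,g^{*}\rangle$ as written in (\ref{l3000}) --- a sign discrepancy already present in the paper's own derivation (compare the $+$ sign in (\ref{l0})), so it is a typo in the statement rather than a defect of your proof.
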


\begin{remark}

The last  formula represents a "measurement" $\left<f,\>g^{*}\right>$ through  "measurements" $\left<e^{(t_{n}-t)D}f,\>g^{*}\right>$ and $\left<e^{t D}f,\>g^{*}\right>$ which are different from $\left<f,\>g^{*}\right>$.

\end{remark}
       \begin{proof}
       
         If $f\in \mathbf{B}_{\sigma}(D)$ then for any $g^{*}\in E^{*}$ the function $F(t)=\left<e^{tD}f,\>g^{*}\right>$ belongs to $B_{\sigma}^{\infty}(\mathbb{R})$.
We consider $F_{1}\in B_{\sigma}^{2}( \mathbb{R}),$ which is defined as follows.
If $t\neq 0$ then 
$$
F_{1}(t)=\frac{F(t)-F(0)}{t}=\left<\frac{e^{tD}f-f}{t},\>g^{*}\right>,\>\>g^{*}\in E^{*}, 
$$
and if $t=0$ then
$$
F_{1}(t)=\frac{d}{dt}F(t)|_{t=0}=\left<Df,\>g^{*}\right>.
$$
We have
$$
F_{1}(t)=\sum_{n\in \mathbb{Z}}F_{1}(t_{n})\frac{G(t)}{G^{'}(t_{n})(t-t_{n})}
$$
or
$$
\left<\frac{e^{tD}f-f}{t},\>g^{*}\right>=\sum_{n\in \mathbb{Z}}\left<\frac{e^{t_{n}D}f-f}{t_{n}},\>g^{*}\right>\frac{G(t)}{G^{'}(t_{n})(t-t_{n})}
$$
uniformly on all compact subsets of $\mathbb{R}$. 

If  we pick a non-zero $\tau$ such that $\tau \neq t_{n}$ for all $t_{n}$ and set $f$ in (\ref{s3})  to $e^{\tau D}f$ then for $t=-\tau$ we will have the following formula which does not have vector $f$ on the right-hand side 
\begin{equation}\label{l3}
\left<f,g^{*}\right>=\left<e^{\tau D}f,\>g^{*}\right>+\tau\sum_{n\in \mathbb{Z}}\frac{\left<e^{(t_{n}-\tau)D}f,\>g^{*}\right>-\left<e^{\tau D}f,\>g^{*}\right>}{t_{n}}\frac{G(-\tau)}{G^{'}(t_{n})(\tau+t_{n})}.
\end{equation}
Theorem is proved.

\end{proof}
In \cite{S} the following  result can be found.

\begin{theorem}\label{S}
Let $\{t_{n}\}_{n\in \mathbb{Z}}$ be a sequence of real numbers such that 
$$
\sup_{n\in\mathbb{Z}}|t_{n}-n|<1/4.
$$
Define the entire function 
$$
G(z)=(z-t_{0})\prod_{n=1}^{\infty}\left(1-\frac{z}{t_{n}}\right)\left(1-\frac{z}{t_{-n}}\right).
$$
Let $\delta$ be any positive number such that $0<\delta<\pi$. Then for all $f\in \mathbf{B}_{\pi-\delta}(\mathbb{R})$ we have
$$
f(z)=\sum_{n\in \mathbb{Z}}f(t_{n})\frac{G(z)}{G^{'}(t_{n})(z-t_{n})}
$$
uniformly on all compact subsets of $\mathbb{C}$.
\end{theorem}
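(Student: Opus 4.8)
The plan is to prove the interpolation identity by the classical residue (contour-integration) argument, in the form it takes when the exponential type is strictly below $\pi$: the gap $\delta$ then furnishes honest exponential decay of the remainder integral, so that no delicate estimate at the edge of the Paley--Wiener band is needed. Two inputs are used. First, $f\in\mathbf{B}_{\pi-\delta}(\mathbb{R})$ means $f$ is entire of exponential type at most $\pi-\delta$ and bounded on $\mathbb{R}$, so by Phragm\'en--Lindel\"of $|f(x+iy)|\le\|f\|_{\infty}e^{(\pi-\delta)|y|}$ for all $x,y$. Second, the hypothesis $\sup_{n}|t_{n}-n|<1/4$ makes the canonical product defining $G$ converge as written, makes $G$ entire of exponential type $\pi$ with simple zeros exactly at the points $t_{n}$ (which are then separated by at least $\tfrac12$), and, writing $L:=2\sup_{n}|t_{n}-n|<\tfrac12$, gives the classical perturbation-of-zeros estimates for sine-type functions,
$$
|G(x+iy)|\le C\,(1+|x+iy|)^{L}e^{\pi|y|}\ \ (x,y\in\mathbb{R}),\qquad |G(x+iy)|\ge c\,(1+|x+iy|)^{-L}e^{\pi|y|}\ \ (|y|\ge1),
$$
together with the fact that on each vertical line $\mathrm{Re}\,\zeta=N+\tfrac12$, where $|\sin\pi\zeta|=\cosh(\pi\,\mathrm{Im}\,\zeta)\ge\tfrac12 e^{\pi|\mathrm{Im}\,\zeta|}$ while the distance of $\mathrm{Re}\,\zeta$ to $\mathbb{Z}$ is $\tfrac12$ and to $\{t_{n}\}$ is at least $\tfrac14$, one has $|G(\zeta)|\ge c\,(1+|\zeta|)^{-L}e^{\pi|\mathrm{Im}\,\zeta|}$ for \emph{every} value of $\mathrm{Im}\,\zeta$. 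These are obtained in the standard way, by comparing $G$ with $\tfrac1\pi\sin\pi z$ and controlling $\log\bigl|\pi G(z)/\sin\pi z\bigr|$ through the locally uniformly convergent series $\sum_{n\ge1}(t_{n}-n)\bigl(\tfrac1{n-z}-\tfrac1n\bigr)$ and its companion over negative indices (see the Paley--Wiener-type material underlying \cite{Hig}, \cite{S}).

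With these in hand, fix $z\in\mathbb{C}$; I may assume $z\notin\{t_{n}\}$, since otherwise the asserted identity is immediate from the interpolatory property $G(t_{m})/\bigl(G'(t_{n})(t_{m}-t_{n})\bigr)=\delta_{nm}$. For large $N\in\mathbb{N}$ I apply the residue theorem to $\zeta\mapsto\dfrac{f(\zeta)}{G(\zeta)}\cdot\dfrac{G(z)}{\zeta-z}$ on the positively oriented rectangle with vertical sides $\mathrm{Re}\,\zeta=\pm\bigl(N+\tfrac12\bigr)$ and horizontal sides $\mathrm{Im}\,\zeta=\pm R$. The simple pole at $\zeta=z$ has residue $f(z)$; the simple pole at each enclosed node $t_{n}$, and since $|t_{n}-n|<\tfrac14$ these are precisely those with $|n|\le N$, has residue $-f(t_{n})G(z)\big/\bigl(G'(t_{n})(z-t_{n})\bigr)$; hence
$$
f(z)-\sum_{|n|\le N}\frac{f(t_{n})\,G(z)}{G'(t_{n})(z-t_{n})}=\frac{1}{2\pi i}\oint\frac{f(\zeta)}{G(\zeta)}\,\frac{G(z)}{\zeta-z}\,d\zeta .
$$
Letting first $R\to\infty$ with $N$ fixed, the two horizontal segments carry an integrand of size $O\bigl(R^{\,L-1}e^{-\delta R}\bigr)$ over a segment of fixed length, so they contribute $0$, leaving only the two full vertical lines. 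On $\mathrm{Re}\,\zeta=N+\tfrac12$ the lower bound on $|G|$ above, the bound $|f(\zeta)|\le\|f\|_{\infty}e^{(\pi-\delta)|\mathrm{Im}\,\zeta|}$, and $|\zeta-z|\ge N+\tfrac12-|\mathrm{Re}\,z|$ combine to bound that line integral by $O\bigl(N^{\,L-1}\bigr)$, the point being that $\int_{\mathbb{R}}\bigl(1+N+|y|\bigr)^{L}e^{-\delta|y|}\,dy=O(N^{L})$ since $e^{-\delta|y|}$ is integrable against every polynomial weight; the line $\mathrm{Re}\,\zeta=-\bigl(N+\tfrac12\bigr)$ is handled identically. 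Since $L<\tfrac12<1$, the right-hand side tends to $0$ as $N\to\infty$, so the symmetric partial sums converge and
$$
f(z)=\sum_{n\in\mathbb{Z}}\frac{f(t_{n})\,G(z)}{G'(t_{n})(z-t_{n})} .
$$
Every estimate above involves $z$ only through $|G(z)|$ and $|\mathrm{Re}\,z|$, both bounded on compact subsets of $\mathbb{C}$, so the convergence is uniform on compacta, as claimed.

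I expect the main obstacle to be precisely the package of estimates for $G$ assembled in the first paragraph; once these are in place, what remains is the routine residue bookkeeping above. That package is also where the full force of $\sup_{n}|t_{n}-n|<1/4$ is spent: convergence of the product, the bound $L=2\sup_{n}|t_{n}-n|<\tfrac12$, which is what makes $N^{L-1}\to0$ so that the vertical integrals die, and separation of the nodes by at least $\tfrac12$, which is what allows the rectangles to run through $\mathrm{Re}\,\zeta=\pm\bigl(N+\tfrac12\bigr)$, where $G$ stays comfortably bounded away from $0$. Because of the gap $\delta>0$ the argument is far from sharp in $L$, and a perturbation somewhat larger than $1/4$ could in fact be tolerated here; I keep $1/4$ since that is the stated hypothesis and it is already what the endpoint result Theorem~\ref{Hig} requires.
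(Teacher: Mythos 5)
The paper offers no proof of Theorem \ref{S} to compare against: it is quoted verbatim from Seip \cite{S} and used as a black box. Judged on its own, your contour--integral argument is correct and is in fact the standard route to results of this type (the Levinson/Paley--Wiener method underlying both \cite{Hig} and \cite{S}). The residue bookkeeping is right, the poles enclosed by the rectangle with vertical sides $\operatorname{Re}\zeta=\pm(N+\tfrac12)$ are exactly the $t_{n}$ with $|n|\le N$, the horizontal sides die as $R\to\infty$, and the $O(N^{L-1})$ bound on the vertical lines is exactly where the gap $\delta>0$ is spent: the factor $e^{-\delta|y|}$ makes the vertical integrals converge with no $L^{2}$ input, and $L<\tfrac12<1$ kills the remainder. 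The only place where you have a sketch rather than a proof is the package of two-sided estimates on $G$, above all the lower bound $|G(\zeta)|\ge c\,(1+|\zeta|)^{-L}e^{\pi|\operatorname{Im}\zeta|}$ on the lines $\operatorname{Re}\zeta=\pm(N+\tfrac12)$ with $c$ independent of $N$; you correctly flag this as the technical core, and it is classical (obtained by the comparison with $\pi^{-1}\sin\pi z$ that you indicate), but a self-contained write-up must include that computation. Two further remarks: the series you produce converges as a limit of symmetric partial sums rather than absolutely, which is the correct reading of $\sum_{n\in\mathbb{Z}}$ here and should be said explicitly; and Seip's theorem in \cite{S} is stated for the larger class of bounded functions whose distributional spectrum lies in $[-\pi+\delta,\pi-\delta]$, whereas your argument covers the Bernstein class $\mathbf{B}_{\pi-\delta}(\mathbb{R})$ --- which is all this paper actually uses, since it is applied to $\left<e^{zD}f,g^{*}\right>$.
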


From here we obtain the next fact.

 \begin{theorem}\label{irregth}
 Suppose that  $D$ generates a one-parameter strongly continuous  group  $e^{tD}$ of isometries  in a Banach space $E$. Then in the same notations as in Theorem \ref{S} one has that for all $f\in \mathbf{B}_{\pi-\delta}(D)$ and all $g^{*}\in E^{*}$  the following formula holds 
\begin{equation}\label{s4}
 \left<e^{zD}f, g^{*}\right>=\sum_{n\in \mathbb{Z}}\left<e^{t_{n}D}f,\>g^{*}\right>\frac{G(z)}{G^{'}(t_{n})(z-t_{n})}
 \end{equation}
 uniformly on all compact subsets of $\mathbb{C}$.
\end{theorem}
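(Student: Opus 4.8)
The plan is to reduce the statement to the scalar interpolation theorem of \cite{S} (Theorem \ref{S}), exactly in the style of the proof of the preceding irregular sampling theorem. Fix $f\in\mathbf{B}_{\pi-\delta}(D)$ and $g^{*}\in E^{*}$, and set $F(z)=\left<e^{zD}f,\>g^{*}\right>$. The first step is to check that $F$ belongs to the classical Bernstein space $\mathbf{B}_{\pi-\delta}(\mathbb{R})$ that occurs in Theorem \ref{S}. This is immediate from what has already been established: by the characterization of $\mathbf{B}_{\pi-\delta}(D)$ in terms of the trajectory $e^{tD}f$ (Theorem \ref{PW2}, or equivalently the list of equivalent conditions stated earlier), membership of $f$ in $\mathbf{B}_{\pi-\delta}(D)$ forces $F$ to be an entire function of exponential type $\pi-\delta$; and since $\|e^{tD}f\|=\|f\|$ we have $|F(t)|\leq\|g^{*}\|\,\|f\|$ for all real $t$, so $F$ is bounded on $\mathbb{R}$. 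Alternatively one argues directly from (\ref{Bernstein}): the bound $|\left<D^{k}f,\>g^{*}\right>|\leq\|g^{*}\|(\pi-\delta)^{k}\|f\|$ gives $|F(z)|\leq\|g^{*}\|\,\|f\|\,e^{(\pi-\delta)|z|}$, which is the desired type estimate.

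Once $F\in\mathbf{B}_{\pi-\delta}(\mathbb{R})$ is in hand, Theorem \ref{S} applies verbatim and yields
$$
F(z)=\sum_{n\in\mathbb{Z}}F(t_{n})\frac{G(z)}{G^{'}(t_{n})(z-t_{n})}
$$
uniformly on compact subsets of $\mathbb{C}$, with $G$ the entire function built from the nodes $\{t_{n}\}$. Substituting $F(t_{n})=\left<e^{t_{n}D}f,\>g^{*}\right>$ recovers (\ref{s4}) together with the asserted uniform convergence on compacta. No convergence argument in the norm of $E$ is needed, since the claim is purely scalar (it concerns the numbers $\left<e^{zD}f,\>g^{*}\right>$ for $z\in\mathbb{C}$): the proof is finished as soon as the hypotheses of Theorem \ref{S} are verified.

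The only point demanding a little attention — the \emph{main obstacle}, although it is a mild one — is making sure that what $f\in\mathbf{B}_{\pi-\delta}(D)$ supplies matches exactly what Theorem \ref{S} requires, namely that the exponential type of $F$ is at most $\pi-\delta$, hence strictly below $\pi$, so that the interpolation theorem with nodes satisfying $\sup_{n}|t_{n}-n|<1/4$ applies legitimately without any $L^{2}$ hypothesis or correction term. It is worth noting, perhaps in a remark, that the stronger vector-valued identity $e^{zD}f=\sum_{n}e^{t_{n}D}f\,\frac{G(z)}{G^{'}(t_{n})(z-t_{n})}$ in $E$ would require, in addition, control on the size of the coefficients $G(z)/\bigl(G^{'}(t_{n})(z-t_{n})\bigr)$ as $n\to\infty$ in order to secure norm convergence of the series; because $\|e^{t_{n}D}f\|=\|f\|$ does not decay in $n$, this is not automatic, which is why the theorem is phrased through functionals $g^{*}$ (in contrast to (\ref{s1}) or (\ref{s3}), where the auxiliary function $F_{1}\in B_{\sigma}^{2}(\mathbb{R})$ provides summable sampling values).
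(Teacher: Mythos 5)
Your proof is correct and follows the paper's own argument: the paper likewise observes that $F(z)=\left<e^{zD}f,\>g^{*}\right>$ lies in the scalar Bernstein space $\mathbf{B}_{\pi-\delta}(\mathbb{R})$ and then invokes Theorem \ref{S} directly. Your additional verification of the type estimate and your remark on why the statement is weak (functional-wise) rather than norm-convergent are sensible elaborations of the same one-line reduction.
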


\begin{proof}
Proof follows from Theorem \ref{S} \rm since for any $g^{*}\in E^{*}$ the function $ \left<e^{zD}f, g^{*}\right>$ belongs to $ \mathbf{B}_{\pi-\delta}(\mathbb{R})$.
\end{proof}
Note that if in the last formula we will set  $z$ to zero and assume that $t_{0}\neq 0$ we will have a representation  of $\left<f,\>g^{*}\right>$ in terms of samples $\left<e^{t_{n}D}f,\>g^{*}\right>\neq \left<f,\>g^{*}\right>$ i.e.
\begin{equation}\label{l2}
 \left<f, g^{*}\right>= -\sum_{n\in \mathbb{Z}}\left<e^{t_{n}D}f,\>g^{*}\right>\frac{G(0)}{G^{'}(t_{n})t_{n}}.
 \end{equation}

\section{An application to abstract Schr\"{o}dinger equation}\label{appl}

We now assume that $E$ is a Hilbert space and $D$ is a selfadjoint operator. Then $e^{itD}$ is one-parameter group of isometries of $E$.  
By  the spectral theory
\cite{BS}, there exist a direct integral of Hilbert spaces $A=\int
A(\lambda )dm (\lambda )$ and a unitary operator $\mathcal{F}_{D}$
from $E$ onto $A$, which transforms the domain $\mathcal{D}_{k}$
of the operator $ D^{k}$ onto $A_{k}=\{a\in A|\lambda^{k}a\in A
\}$ with norm

$$\|a(\lambda )\|_{A_{k}}= \left (\int^{\infty}_{-\infty} \lambda ^{2k}
\|a(\lambda )\|^{2}_{A(\lambda )} dm(\lambda ) \right )^{1/2} $$
and  $\mathcal{F}_{D}(Df)=\lambda (\mathcal{F}_{D}f), f\in
\mathcal{D}_{1}. $

In this situation one can prove \cite{Pes00} the following.

\begin{theorem}
A vector $f\in E$ belongs to $ \mathbf{B}_{\sigma}(D)$ if and only if support of $\mathcal{F}_{D}f$ is in $[-\sigma, \>\sigma]$.
\end{theorem}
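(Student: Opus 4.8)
The plan is to work entirely inside the spectral (direct-integral) picture recalled just before the statement, transporting both conditions through the unitary $\mathcal{F}_{D}$. Write $a=\mathcal{F}_{D}f\in A$. Since $f\in\mathcal{D}^{\infty}$ is $D$-invariant, iterating the intertwining relation $\mathcal{F}_{D}(Df)=\lambda\,\mathcal{F}_{D}f$ gives $\mathcal{F}_{D}(D^{k}f)=\lambda^{k}a$ for every $k$, and because $\mathcal{F}_{D}$ is unitary,
$$
\|D^{k}f\|_{E}^{2}=\|\lambda^{k}a\|_{A}^{2}=\int_{-\infty}^{\infty}\lambda^{2k}\,\|a(\lambda)\|_{A(\lambda)}^{2}\,dm(\lambda),\qquad k\in\mathbb{N}.
$$
Thus membership in $\mathbf{B}_{\sigma}(D)$ is equivalent to the family of moment inequalities $\int\lambda^{2k}\|a(\lambda)\|_{A(\lambda)}^{2}\,dm(\lambda)\le\sigma^{2k}\|a\|_{A}^{2}$ holding for all $k$, together with finiteness of all these integrals (i.e. $f\in\mathcal{D}^{\infty}$).

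For the sufficiency direction, assume $\operatorname{supp}a\subseteq[-\sigma,\sigma]$. Then $|\lambda|\le\sigma$ $m$-a.e. on the support, so $\|\lambda^{k}a\|_{A}^{2}=\int_{[-\sigma,\sigma]}\lambda^{2k}\|a(\lambda)\|^{2}\,dm\le\sigma^{2k}\|a\|_{A}^{2}<\infty$; hence $a\in A_{k}$ for every $k$, which by the mapping property of $\mathcal{F}_{D}$ means $f\in\mathcal{D}^{k}$ for every $k$, i.e. $f\in\mathcal{D}^{\infty}$. The same estimate reads $\|D^{k}f\|\le\sigma^{k}\|f\|$, so $f\in\mathbf{B}_{\sigma}(D)$.

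For the necessity direction, let $f\in\mathbf{B}_{\sigma}(D)$ and suppose, toward a contradiction, that $a$ does not vanish $m$-a.e. outside $[-\sigma,\sigma]$. Writing $\{|\lambda|>\sigma\}=\bigcup_{n\ge1}\{|\lambda|\ge\sigma+1/n\}$, there exists $\varepsilon>0$ with $c:=\int_{\{|\lambda|\ge\sigma+\varepsilon\}}\|a(\lambda)\|_{A(\lambda)}^{2}\,dm>0$. Then for every $k$,
$$
\sigma^{2k}\|f\|^{2}\ge\|D^{k}f\|^{2}\ge\int_{\{|\lambda|\ge\sigma+\varepsilon\}}\lambda^{2k}\|a(\lambda)\|_{A(\lambda)}^{2}\,dm\ge(\sigma+\varepsilon)^{2k}c.
$$
If $\sigma>0$ this forces $\bigl((\sigma+\varepsilon)/\sigma\bigr)^{2k}\le\|f\|^{2}/c$ for all $k$, impossible since the left-hand side tends to $\infty$; if $\sigma=0$ the inequality reads $\varepsilon^{2k}c\le0$, again impossible. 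Hence $a(\lambda)=0$ for $m$-a.e. $\lambda\notin[-\sigma,\sigma]$, i.e. $\operatorname{supp}\mathcal{F}_{D}f\subseteq[-\sigma,\sigma]$.

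The only point requiring a little care is the measure-theoretic bookkeeping in the direct integral: making precise that a nonvanishing of $a$ outside $[-\sigma,\sigma]$ yields a set carrying positive $\|a(\cdot)\|_{A(\cdot)}^{2}\,dm$-mass at distance $\ge\varepsilon$ from the interval. This is routine once one recalls that the support of an element of $A=\int A(\lambda)\,dm(\lambda)$ is defined only up to $m$-null sets, so I expect no real obstacle. (One could alternatively deduce the result from Theorem~\ref{PW2} applied to the isometry group $e^{izD}$ together with the classical Paley--Wiener theorem, realizing $\langle e^{izD}f,g^{*}\rangle$ as a Fourier transform of a compactly supported measure, but the spectral argument above is shorter and self-contained.)
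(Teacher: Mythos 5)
Your proof is correct. The paper itself does not prove this theorem but defers to \cite{Pes00}; the argument there is exactly the spectral one you give (transport through $\mathcal{F}_{D}$, the identity $\|D^{k}f\|^{2}=\int\lambda^{2k}\|a(\lambda)\|^{2}\,dm$, and the $k\to\infty$ growth comparison on a set $\{|\lambda|\ge\sigma+\varepsilon\}$ of positive mass), so your proposal supplies the intended proof, including the correct handling of the essential support and the $\sigma=0$ case.
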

We consider an  abstract Cauchy problem for a self-adjoint operator $D$ which consists of funding an abstract-valued function $u: \mathbf{R}\rightarrow E$ which satisfies Schr\"{o}dinger equation and has bandlimited initial condition
\begin{equation}\label{C1}
\frac{du(t)}{dt}=iDu(t),\>\>\>u(0)=f\in \mathbf{B}_{\sigma}(D)
\end{equation}
(see \cite{BB}, \cite{K} for more details). 

In this case formula (\ref{l2}) can be treated as a solution to inverse problem associated with (\ref{C1}).

\begin{theorem}
If conditions of Theorem \ref{irregth} are satisfied and  $t_{0}\neq 0$ then initial condition $f\in \mathbf{B}_{\pi-\delta}(D),\>\>0<\delta<\pi,$ in (\ref{C1}) can be reconstructed (in weak sense) from the values of the solution $u(t_{n})$ by using the formula 
\begin{equation}
 \left<f, g^{*}\right>=-\sum_{n\in \mathbb{Z}}\left<u(t_{n}), g^{*}\right>\frac{G(0)}{G^{'}(t_{n})t_{n}},\>\>\>g^{*}\in E^{*}.
 \end{equation}

\end{theorem}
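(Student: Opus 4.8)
The plan is to show that the final theorem is essentially a direct corollary of Theorem \ref{irregth} combined with the observation, already recorded in equation (\ref{l2}), that setting $z=0$ in the master sampling formula (\ref{s4}) produces a weak reconstruction of $f$ from the samples $\langle e^{t_n D}f,\,g^*\rangle$. So the first step is to observe that since $f\in\mathbf{B}_{\pi-\delta}(D)$ is the initial condition of the Cauchy problem (\ref{C1}), the solution is $u(t)=e^{itD}f$; hence $u(t_n)=e^{it_n D}f$. The only mild bookkeeping point is the presence of the imaginary unit in (\ref{C1}): here $D$ is self-adjoint, so $iD$ generates the one-parameter group $e^{itD}$ of isometries, and $f\in\mathbf{B}_{\pi-\delta}(D)=\mathbf{B}_{\pi-\delta}(iD)$ because $\|(iD)^k f\|=\|D^k f\|$, so the Bernstein condition is insensitive to the factor $i$. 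Thus Theorem \ref{irregth} applies verbatim with $D$ replaced by $iD$.

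Second, I would apply Theorem \ref{irregth} to the group $e^{itD}$ and the vector $f$: for every $g^*\in E^*$ and every $z\in\mathbb{C}$,
\begin{equation}
\langle e^{izD}f,\,g^*\rangle=\sum_{n\in\mathbb{Z}}\langle e^{it_n D}f,\,g^*\rangle\frac{G(z)}{G'(t_n)(z-t_n)},
\end{equation}
with convergence uniform on compact subsets of $\mathbb{C}$. Since $t_0\neq 0$, every $t_n\neq 0$ is automatic only for $n\neq 0$, but the product $G$ and the formula are already set up so that $z=0$ is a legitimate evaluation point; uniform convergence on compacta lets me substitute $z=0$ term by term. This gives
\begin{equation}
\langle f,\,g^*\rangle=\sum_{n\in\mathbb{Z}}\langle e^{it_n D}f,\,g^*\rangle\frac{G(0)}{G'(t_n)(-t_n)}=-\sum_{n\in\mathbb{Z}}\langle u(t_n),\,g^*\rangle\frac{G(0)}{G'(t_n)t_n},
\end{equation}
which is exactly the asserted reconstruction formula. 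This is precisely formula (\ref{l2}) with the samples reinterpreted as values of the solution $u$.

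There is essentially no hard analytic step here: everything is inherited from Theorem \ref{irregth}, which in turn rests on the scalar irregular sampling result of \cite{S} and the fact (the earlier equivalence theorem) that $\langle e^{izD}f,g^*\rangle\in\mathbf{B}_{\pi-\delta}(\mathbb{R})$ whenever $f\in\mathbf{B}_{\pi-\delta}(D)$. The only point requiring a sentence of care is justifying that $z=0$ may be plugged into the uniformly-convergent-on-compacta series — which is immediate — and noting that $G(0)=-t_0\prod_{n\ge 1}1=-t_0\neq 0$, so the coefficients are genuinely nontrivial and the samples $\langle u(t_n),g^*\rangle=\langle e^{it_n D}f,g^*\rangle$ differ from $\langle f,g^*\rangle$ in general. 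Thus the main "obstacle" is purely expository: making explicit that the abstract Schrödinger group $e^{itD}$ is an isometry group generated by $iD$ and that the Bernstein class is unchanged under multiplication of the generator by $i$, so that the previously proved machinery transfers without modification. I would close by remarking that the reconstruction is only in the weak sense (for each fixed $g^*$), matching the statement, because the scalar formula is applied functional-by-functional.
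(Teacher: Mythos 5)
Your proposal is correct and follows essentially the same route as the paper: the paper itself obtains this theorem by specializing formula (\ref{s4}) of Theorem \ref{irregth} at $z=0$ to get (\ref{l2}) and then identifying the samples $e^{it_{n}D}f$ with the solution values $u(t_{n})$ of (\ref{C1}). Your extra remarks --- that $iD$ generates the isometry group $e^{itD}$ for self-adjoint $D$, that $\|(iD)^{k}f\|=\|D^{k}f\|$ so the Bernstein class is unchanged, and that $G(0)=-t_{0}\neq 0$ --- are correct and merely make explicit the bookkeeping the paper leaves implicit.
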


Similar results can be formulated by using  formulas  (\ref{l1}) and (\ref{l3}).


\begin{thebibliography}{}


\bibitem{Akh}
J. ~Akhiezer, {\em Theory of approximation}, Ungar, NY, 1956.

\bibitem{BS}
M. ~Birman and M. Solomyak, {\em Spectral thory of selfadjoint
operators in Hilbert space}, D. Reidel Publishing Co., Dordrecht,
1987.


\bibitem{B} R. ~Boas, {\em Entire Functions,} Academic Press,
New York (1954).

\bibitem{B1} 
R. Boas,  {\em The derivative of a trigonometric integral},  J. Lond. Math. Soc. 1937, 1-12, 164.



\bibitem{BB}
P. ~Butzer, H. ~Berens, {\em Semi-Groups of operators and
approximation}, Springer, Berlin, 1967 .



\bibitem{BFHSS}

P.L. Butzer, P.J.S.G. Ferreira, J.R. Higgins, G. Schmeisser, 
R.L. Stens, {\em      The sampling theorem, PoissonÕs summation formula, general
Parseval formula, reproducing kernel formula and the Paley-Wiener theorem for bandlimited signals- their interconnections}, Applicable Analysis
Vol. 90, Nos. 3-4, March-April 2011, 431-461.


\bibitem{BSS}
P. L. Butzer, G. Schmeisser,  R.L. Stens, {\em ShannonÕs Sampling Theorem for Bandlimited Signals and Their
Hilbert Transform, Boas-Type Formulae for Higher Order
DerivativesÑThe Aliasing Error Involved by Their Extensions
from Bandlimited to Non-Bandlimited Signals}, Entropy 2012, 14, 2192-2226; doi:10.3390/e14112192.


\bibitem{Hig}J. R. Higgins, {\em A sampling theorem for irregular sample points}, IEEE Trans. Inform. Theory, IT-22
(1976), pp. 621-622.

\bibitem{K} S. G. Krein,  {\em Linear differential equations in Banach space}, Translated from the Russian by J. M. Danskin. Translations of Mathematical Monographs, Vol. 29. American Mathematical Society, Providence, R.I., 1971. v+390 pp. 


\bibitem{KPes}
S.~Krein, I.~Pesenson, {\em Interpolation Spaces and Approximation
on Lie Groups}, The Voronezh State University, Voronezh, 1990, 100 pp.


\bibitem{Pes00}
 I. ~Pesenson, {\em  A sampling theorem on homogeneous manifolds},
 {\em Trans. Amer. Math. Soc.,}  Vol. { 352}(9),(2000), 4257-4269.



\bibitem{Pes08}
I. ~Pesenson, {\em Bernstein-Nikolskii inequalities  and Riesz
interpolation formula on compact homogeneous manifolds},  J. of
Approx. Theory 150, (2008), no. 2, 175--198.


\bibitem{Pes08a}
I. ~Pesenson, {\em  A discrete Helgason-Fourier transform for Sobolev and Besov functions on noncompact symmetric spaces},  Radon transforms, geometry, and wavelets, 231Ð247, Contemp. Math., 464, Amer. Math. Soc., Providence, RI, 2008.

\bibitem{Pes09}
I. ~Pesenson, {\em Paley-Wiener approximations and multiscale approximations in Sobolev and Besov spaces on manifolds}, J. Geom. Anal.  19  (2009),  no. 2, 390--419.

\bibitem{PZ}
I. ~Pesenson, A. ~Zayed, {\em PaleyÐWiener subspace of vectors in a  Hilbert space with applications
to integral transforms}, J.  Math. Anal. Appl. 353 (2009) 566-582.



\bibitem{Pes11}
I. Pesenson, M. Pesenson, {\em Approximation of Besov vectors by Paley-Wiener vectors in Hilbert spaces},
  Approximation Theory XIII: San Antonio 2010 (Springer Proceedings in Mathematics), by Marian Neamtu and Larry Schumaker, 249--263.

\bibitem{Pes14}
I. ~Pesenson, {\em Boas-type formulas and sampling   in Banach  spaces with applications to analysis on  manifolds }, submitted.


\bibitem{R1}
M. Riesz,  {\em Eine trigonometrische Interpolationsformel und einige Ungleichungen fur Polynome}, 
Jahresber. Deutsch. Math.-Verein. 1914, 23, 354Ð368.

\bibitem{R}
M. ~Riesz, {\em Les fonctions conjuguees et les series de Fourier}, C.R. Acad. Sci. 178, 1924, 1464-1467.

\bibitem{Schm}G. ~Schmeisser, {\em Numerical differentiation inspired by a formula of R. P. Boas},  J. Approx. Theory
2009, 160, 202-222.

\bibitem{S}
 C.  Seip, {\em An irregular sampling Theorem for functions bandlimited in a generalized sense},  SIAM J. Appl. Math.
Vol. 47, No. 5, October 1987
 
 

\bibitem{Stein} E. M. Stein, {\em Functions of exponential type}, 
Annals. Math., Vol.65, (1957), pp. 582-592.



\end{thebibliography}
\end{document}